%
%


\documentclass{llncs}

\usepackage{makeidx}  
\usepackage{amsmath,amssymb}
\usepackage{multirow}

\DeclareMathOperator{\diag}{diag}

\DeclareMathOperator{\rank}{rank}
\DeclareMathOperator{\nrank}{nrank}


\spnewtheorem{algorithm}{Algorithm}{\bfseries}{\itshape}
\spnewtheorem{fact}{Fact}{\bfseries}{\itshape}
\spnewtheorem{procedure}{Procedure}{\bfseries}{\itshape}
\spnewtheorem{subroutine}{Subroutine}{\bfseries}{\itshape}
\spnewtheorem{flowchart}{Flowchart}{\bfseries}{\itshape}



\begin{document}

\title{\bf Supporting GENP with Random Multipliers}
\author{Victor Y. Pan\inst{1,}\  \inst{2} 
\and Guoliang Qian\inst{2}
 \and Xiaodong Yan\inst{2}\\
}
%
\authorrunning{Victor Y. Pan et al.}   
\institute{Department of Mathematics and Computer Science \\
Lehman College of the City University of New York \\
Bronx, NY 10468 USA \\
\email{victor.pan@lehman.cuny.edu},\\ WWW home page:
\texttt{http://comet.lehman.cuny.edu/vpan/}
\and
Ph.D. Programs in Mathematics and Computer Science \\
The Graduate Center of the City University of New York \\
New York, NY 10036 USA \\
gqian@gc.cuny.edu,
 xyannyc@yahoo.com \\}


\maketitle


\begin{abstract}
We prove that standard  Gaussian random multipliers  
are expected to stabilize numerically  both Gaussian 
elimination with no pivoting and block Gaussian elimination.
Our tests show similar results where we applied
 circulant random
 multipliers instead of Gaussian ones.
\end{abstract}


\paragraph{\bf Key Words:}
Random matrices,
Random multipliers,
GENP
 


\section{Introduction
}\label{sgenp}


It is well known that even for a nonsingular and well conditioned input matrix,
Gaussian elimination
fails in numerical 
computations with rounding errors
as soon as it encounters 
a vanishing or nearly vanishing
leading (that is northwestern) entry.
In practice the
users avoid such encounters by applying 
GEPP, which stands for Gaussian elimination 
with partial pivoting and has some limited 
formal but ample empirical support. 
Partial pivoting, that is an appropriate row interchange,
 however, takes its toll:
"pivoting usually degrades the performance"
\cite[page 119]{GL96}.
It 
interrupts the stream of arithmetic operations 
with foreign operations of comparison,
involves book-keeping, compromises data locality,
increases communication overhead and data dependence, 
 and
tends to destroy matrix structure.
The users apply {\em Gaussian elimination with no pivoting} (hereafter 
we refer to it as {\em GENP}) or its variants
wherever this application  is numerically safe,
in particular where 
a well  conditioned input matrix
 is
positive definite,
 diagonally 
dominant,  or totally positive.
By following the idea in \cite[Section 12.2]{PGMQ},
briefly revisited in \cite[Section 6]{PQZ13},
we generalize 
this class to all
nonsingular and well
conditioned matrices
by {\em preconditioning} GENP with 
standard Gaussian random multipliers (hereafter we call 
them just {\em Gaussian}). 
We provide formal support for
this approach, which 
 can be extended 
to block Gaussian elimination
(see Section \ref{sbgegenpn}).
Our tests are in good accordance with our formal study.
Moreover multiplication by circulant 
(rather than Gaussian) random multipliers
has the same empirical power, although it
uses by a factor of $n$ fewer random parameters and 
by a factor of $n/\log (n)$ fewer flops
in the case of $n\times n$ inputs.
The latter saving is particularly dramatic in the important case where 
the input matrix has Toeplitz structure.
Our study is
 a new demonstration of the power of randomized matrix algorithms
(cf. \cite{M11}, \cite{HMT11}, and the bibliography therein).


\section{Some definitions and basic results}\label{sdef}


We assume computations in the field $\mathbb R$ of real numbers,
but the extension to the case of the complex field $\mathbb C$
is quite straightforward.
Hereafter ``flop" stands for ``arithmetic operation",
and ``Gaussian matrix" stands for
``standard Gaussian random matrix". 
 The concepts ``large", ``small", ``near", ``closely approximate", 
``ill  conditioned" and ``well conditioned" are 
quantified in the context. By saying ``expect" and ``likely" we
mean ``with probability $1$ or close to $1$"
(we do not use the concept of the expected value).
Next we recall and extend some customary definitions of matrix computations
\cite{GL96}, \cite{S98}.


\subsection{Some basic definitions of matrix computations}\label{smat}

 

$\mathbb  R^{m\times n}$ is the class of real $m\times n$ matrices $A=(a_{i,j})_{i,j}^{m,n}$.
$(B_1~|~\dots~|~B_k)$ is a $1\times k$ block matrix with the blocks $B_1,\dots,B_k$. 
$\diag (B_1,\dots,B_k)=\diag(B_j)_{j=1}^k$ is a $k\times k$ block diagonal matrix 
with the diagonal blocks $B_1,\dots,B_k$. In both cases the blocks $B_j$ can be rectangular.
${\bf e}_i$ is the $i$th coordinate vector of dimension $n$ for
$i=1,\dots,n$. These vectors define
the  $n\times n$ identity
matrix $I_n=({\bf e}_1~|~\dots~|~{\bf e}_n)$. 
$O_{k,l}$ is the $k\times l$ matrix filled with zeros. 
We write $I$ and $O$ where the  matrix size  
 is defined by context. 
$\rank (A)$ denotes the rank of a 
matrix $A$. $A^T$ is its transpose.
 $A_{k,l}$  is its leading, 
that is northwestern  $k\times l$ block 
submatrix, and 
in Section \ref{sbgegenpn}
we also write  $A^{(k)}=A_{k,k}$.
 $A_s^T$ denotes the transpose $(A_s)^T$ 
of a matrix $A_s$, e.g., $A_{k,l}^T$ stands for $(A_{k,l})^T$.
A matrix of a rank $\rho$ has {\em generic rank profile}
if all its leading $i\times i$ blocks are nonsingular for $i=1,\dots,\rho$.
If such a matrix is nonsingular itself, then  it is called {\em strongly nonsingular}.
{\em Preprocessing} $A\rightarrow FA$, $A\rightarrow AH$, and $A\rightarrow FAH$, 
for nonsingular matrices $F$ and $H$, 
reduces
the inversion of a matrix $A$ to
the inversion of the products $FA$, $AH$, or $FAH$,
and similarly for the solution of a linear system of equations.  
\begin{fact}\label{faprepr}
Assume three nonsingular matrices $F$, $A$, and $H$
 and a vector ${\bf b}$.
Then
$A^{-1}=H(FAH)^{-1}F$, 
$FAH{\bf y}=F{\bf b}$, ${\bf x}=H{\bf y}$ if $A{\bf x}={\bf b}$. 
\end{fact}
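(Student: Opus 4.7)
The plan is to establish the two assertions by direct algebraic verification, since the nonsingularity of $F$, $A$, and $H$ is given as the hypothesis.

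First I would treat the matrix identity $A^{-1}=H(FAH)^{-1}F$. Since $F$, $A$, and $H$ are all nonsingular, their product $FAH$ is nonsingular, and the standard identity $(FAH)^{-1}=H^{-1}A^{-1}F^{-1}$ applies. Substituting this into the right-hand side collapses the expression via $HH^{-1}=I$ and $F^{-1}F=I$, leaving exactly $A^{-1}$. This is a one-line computation and requires nothing beyond the basic rule for inverting a product of invertible matrices.

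Next I would handle the linear-system statement. Assuming $A{\bf x}={\bf b}$ and setting ${\bf x}=H{\bf y}$, which is a legitimate change of variables because $H$ is invertible, I would substitute to obtain $AH{\bf y}={\bf b}$ and then left-multiply by $F$ to get $FAH{\bf y}=F{\bf b}$. Conversely, given the solution ${\bf y}$ of $FAH{\bf y}=F{\bf b}$, left-multiplying by $F^{-1}$ and then recovering ${\bf x}=H{\bf y}$ reproduces $A{\bf x}={\bf b}$, confirming that the preprocessed system is equivalent to the original.

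There is no real obstacle here: the statement is a direct consequence of associativity of matrix multiplication together with the invertibility of $F$ and $H$. The only point worth flagging explicitly is that $FAH$ inherits nonsingularity from its three factors, which ensures that $(FAH)^{-1}$ exists and that the preprocessed linear system has a unique solution ${\bf y}$ whenever $A{\bf x}={\bf b}$ has a unique solution ${\bf x}$.
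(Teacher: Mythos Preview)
Your proposal is correct; the paper itself states this fact without proof, treating it as immediate from the standard inversion rule $(FAH)^{-1}=H^{-1}A^{-1}F^{-1}$, which is exactly the verification you carry out.
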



\subsection{Matrix norms, orthogonality, SVD, and pseudo-inverse}\label{sosvdi}


$||A||=||A||_2=\sup_{{\bf v}^T{\bf v}=1}||A{\bf v}||$
is the spectral norm of a matrix $A=(a_{i,j})_{i,j=1}^{m,n}$,
$||AB||\le ||A||~||B||$. 
A real matrix $Q$ is called  
{\em orthogonal} if $Q^TQ=I$ 
 or $QQ^T=I$. 

An {\em SVD} or {\em full SVD} of an $m\times n$ matrix $A$ of a rank 
 $\rho$ is a factorization
\begin{equation}\label{eqsvd}
A=S_A\Sigma_AT_A^T
\end{equation}
where $S_A=({\bf s}_i)_{i=1}^m$ and $T_A=({\bf t}_j)_{j=1}^n$ are square orthogonal matrices, 
$\Sigma_A=\diag(\widehat \Sigma_A,O_{m-\rho,n-\rho})$, 
$\widehat \Sigma_A=\diag(\sigma_j(A))_{j=1}^{\rho}$,
$\sigma_j=\sigma_j(A)=\sigma_j(A^T)$
is the $j$th largest singular value of a matrix $A$, and 
$\sigma_j=0$ for $j>\rho$,
$\sigma_{\rho}>0$,  
 $\sigma_1=\max_{||{\bf x}||=1}||A{\bf x}||=||A||$,
and 
\begin{equation}\label{eqmins}
\min_{\rank(B)\le s-1}~~~||A-B||=\sigma_{s}(A),~s=1,2,\dots,
\end{equation}

 
\begin{fact}\label{faccondsub} 
If $A_0$ is a 
submatrix of a 
matrix $A$, 
then
$\sigma_{j} (A)\ge \sigma_{j} (A_0)$ for all $j$.
\end{fact}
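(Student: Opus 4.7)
The plan is to derive the inequality directly from the min-rank characterization of singular values given in equation (\ref{eqmins}), which is the cleanest tool available at this point in the paper.

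First I would represent $A_0$ as a sandwich product $A_0 = PAQ^T$, where $P$ is built from the rows of an identity matrix indexed by the rows retained in $A_0$, and $Q$ is built analogously from columns of an identity matrix. Both $P$ and $Q$ have orthonormal rows, so $\|P\| = \|Q\| = \|Q^T\| = 1$; this is the technical fact on which the argument hinges.

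Next, given any matrix $B$ with $\rank(B) \le j-1$, I would define $B_0 = PBQ^T$. Then $\rank(B_0) \le \rank(B) \le j - 1$, and
\begin{equation*}
\|A_0 - B_0\| = \|P(A-B)Q^T\| \le \|P\|\,\|A-B\|\,\|Q^T\| \le \|A-B\|,
\end{equation*}
using the submultiplicativity of the spectral norm recalled in Section \ref{sosvdi}. Applying (\ref{eqmins}) to $A_0$ with the matrix $B_0$ gives $\sigma_j(A_0) \le \|A_0 - B_0\| \le \|A - B\|$. Minimizing the right-hand side over all $B$ of rank at most $j-1$ and invoking (\ref{eqmins}) for $A$ yields $\sigma_j(A_0) \le \sigma_j(A)$, as required.

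There is no real obstacle; the only thing to be careful about is that a general submatrix is obtained by an arbitrary selection of rows and columns (not necessarily contiguous), so one must set up $P$ and $Q$ as generic row/column selection matrices rather than as leading principal projectors. Once that representation is in place, the proof is a one-line norm estimate combined with the characterization of $\sigma_j$ as the distance to the nearest matrix of rank less than $j$.
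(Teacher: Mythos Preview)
Your argument is correct. The representation $A_0=PAQ^T$ with row/column selection matrices $P,Q$ having orthonormal rows, the rank bound $\rank(PBQ^T)\le\rank(B)$, and the norm estimate via submultiplicativity all go through, and the appeal to (\ref{eqmins}) on both sides closes the loop cleanly.

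The paper, however, does not argue this way: its proof is simply a pointer to \cite[Corollary 8.6.3]{GL96}, the singular-value interlacing result for deleting a single column (or row), which one then iterates to handle an arbitrary submatrix. Your route is genuinely different and arguably preferable in context, since it is self-contained within the paper---(\ref{eqmins}) has already been stated---and handles the general submatrix in one stroke rather than column-by-column. The interlacing approach, on the other hand, gives slightly more (two-sided bounds $\sigma_{j+1}(A)\le\sigma_j(A_0)\le\sigma_j(A)$ at each deletion step), though only the upper inequality is needed here.
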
 

 
\begin{proof}
Extend \cite[Corollary 8.6.3]{GL96}.
\end{proof}

 
\begin{fact}\label{faintpr} (Cf. \cite[Corollary 8.6.3]{GL96}.)  
Suppose $r+l\le n\le m$, $l\ge 0$,
 $1\le k\le r$,
 $A\in \mathbb R^{m\times n}$,
and
$A_{m,r}$ is the leftmost $m\times r$ block of the matrix $A$. 
Then $\sigma_{k} (A_{m,r})\ge \sigma_{k+l} (A_{m,r+l})$.
\end{fact}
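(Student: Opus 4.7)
The plan is to use the rank-approximation characterization of singular values recorded in equation (\ref{eqmins}), namely $\sigma_s(M)=\min_{\rank(C)\le s-1}\|M-C\|$, and to show that any near-optimal low-rank approximation of the narrower block $A_{m,r}$ lifts to a low-rank approximation of the wider block $A_{m,r+l}$ of the same spectral-norm error, at the cost of allowing $l$ extra units of rank.

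More concretely, first I would fix an arbitrary matrix $B\in\mathbb R^{m\times r}$ with $\rank(B)\le k-1$, and form the $m\times(r+l)$ matrix
\[
\tilde B \;=\; (B~|~A_{m,r+l}-A_{m,r+l}\diag(I_r,O_{l,l})\text{'s first }r\text{ cols})
\]
more cleanly stated: let $\tilde B$ be the $m\times (r+l)$ matrix whose leftmost $r$ columns are those of $B$ and whose rightmost $l$ columns are the rightmost $l$ columns of $A_{m,r+l}$. Then $\rank(\tilde B)\le\rank(B)+l\le k+l-1$. Moreover, the difference $A_{m,r+l}-\tilde B$ has its leftmost $r$ columns equal to $A_{m,r}-B$ and its rightmost $l$ columns equal to the zero vector, so
\[
\|A_{m,r+l}-\tilde B\|\;=\;\|A_{m,r}-B\|,
\]
since appending zero columns to a matrix preserves its spectral norm (this is immediate from $\|M\|=\sup_{\|x\|=1}\|Mx\|$, as any unit vector supported outside the nonzero columns gives zero).

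Applying (\ref{eqmins}) on the wider block then gives
\[
\sigma_{k+l}(A_{m,r+l})\;\le\;\|A_{m,r+l}-\tilde B\|\;=\;\|A_{m,r}-B\|,
\]
and taking the infimum over all admissible $B$ and invoking (\ref{eqmins}) again for $A_{m,r}$ yields $\sigma_{k+l}(A_{m,r+l})\le\sigma_k(A_{m,r})$, which is the claim.

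There is essentially no hard step: the only thing to verify carefully is that padding a matrix with zero columns does not change its spectral norm and that concatenating $l$ columns raises the rank by at most $l$, both of which are standard. The proof is a direct extension of \cite[Corollary 8.6.3]{GL96} from the column-deletion case ($l=1$) to arbitrary $l\ge 0$, and the hypotheses $r+l\le n\le m$ and $k\le r$ are exactly what is needed to make the singular-value indices legitimate.
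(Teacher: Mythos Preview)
Your argument is correct. The only cosmetic blemish is the aborted formula with $\diag(I_r,O_{l,l})$, which you immediately restate cleanly; the clean version is the one to keep.

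The paper itself offers no proof of this fact: it merely cites \cite[Corollary~8.6.3]{GL96}, where the standard route is the Courant--Fischer min--max characterization (equivalently, Cauchy interlacing applied to $A_{m,r+l}^TA_{m,r+l}$ and its principal submatrix $A_{m,r}^TA_{m,r}$), iterated $l$ times. Your route is genuinely different: you use the Eckart--Young characterization (\ref{eqmins}) already recorded in the paper, lift an arbitrary rank-$(k-1)$ approximant of $A_{m,r}$ to a rank-$(k+l-1)$ approximant of $A_{m,r+l}$ by appending the missing $l$ columns, and observe that the spectral-norm error is preserved because the residual acquires only zero columns. This buys you a one-shot argument for general $l$ (no induction) that is fully self-contained within the paper's own toolkit, whereas the Golub--Van~Loan approach imports the eigenvalue interlacing machinery. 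Both are short; yours fits the surrounding exposition better.
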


$A^+=T_A\diag(\widehat \Sigma_A^{-1},O_{n-\rho,m-\rho})S_A^T$ is the Moore--Penrose 
pseudo-inverse of the matrix $A$ of (\ref{eqsvd}). 
  $A^{+T}$ stands for $(A^+)^T=(A^T)^+$,
and $A_s^+$ stands for $(A_s)^{+}$, e.g., $A_{k,l}^+$ denotes $(A_{k,l})^+$. 

If a matrix $A$ has full column rank $\rho$, then
 $A^+=(A^TA)^{-1}A^T$ and
\begin{equation}\label{eqnrm+}
||A^+||=1/\sigma_{\rho}(A).
\end{equation}



\begin{corollary}\label{cointpr}  
Assume that $\rank(A_{m,r})=r$ and $\rank(A_{m,r+l})=r+l$
for the matrices $A_{m,r}$  and $A_{m,r+l}$ of Fact \ref{faintpr}.
Then $||A_{m,r}^+||\le ||A_{m,r+l}^+||$.
\end{corollary}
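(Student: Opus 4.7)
The plan is to combine equation (\ref{eqnrm+}) with Fact \ref{faintpr} in a single short argument; no new machinery is needed.

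First I would note that both $A_{m,r}$ and $A_{m,r+l}$ have full column rank by hypothesis, so equation (\ref{eqnrm+}) applies to each. This gives the identities $\|A_{m,r}^+\| = 1/\sigma_r(A_{m,r})$ and $\|A_{m,r+l}^+\| = 1/\sigma_{r+l}(A_{m,r+l})$, reducing the claimed inequality $\|A_{m,r}^+\| \le \|A_{m,r+l}^+\|$ to the reverse inequality on the smallest singular values, namely $\sigma_{r+l}(A_{m,r+l}) \le \sigma_r(A_{m,r})$.

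Next I would invoke Fact \ref{faintpr} with $k=r$, whose hypotheses $r+l\le n\le m$, $l\ge 0$, and $1\le k\le r$ are exactly the ones already imposed in the statement of that fact (and inherited by the corollary). The conclusion $\sigma_r(A_{m,r}) \ge \sigma_{r+l}(A_{m,r+l})$ is precisely the singular-value inequality we need, and taking reciprocals yields the stated norm inequality.

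There is essentially no obstacle here: the corollary is an immediate translation of Fact \ref{faintpr} from singular-value language into pseudo-inverse-norm language via (\ref{eqnrm+}). The only thing to double-check is that the full column rank hypothesis really is what licenses the use of (\ref{eqnrm+}) — which it does, since $\rank(A_{m,r})=r$ and $\rank(A_{m,r+l})=r+l$ are exactly the full column rank assumptions for these $m\times r$ and $m\times (r+l)$ matrices (using $r+l\le n\le m$).
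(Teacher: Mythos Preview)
Your proposal is correct and matches the paper's own argument exactly: the paper's proof is the one-line ``Combine Fact~\ref{faintpr} for $k=r$ with equation~(\ref{eqnrm+}),'' which is precisely what you spell out in detail.
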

\begin{proof}
Combine Fact  \ref{faintpr} for $k=r$ with equation (\ref{eqnrm+}).
\end{proof}


\begin{theorem}\label{thpert}  \cite[Corollary 1.4.19]{S98}.
Assume a pair of square matrices $A$ (nonsingular) and $E$ such that 
$||A^{-1}E||\le 1$. Then $||(A+E)^{-1}||\le\frac{||A^{-1}||}{1-||A^{-1}E||}$ and 
moreover $\frac{||(A+E)^{-1}-A^{-1}||}{||A^{-1}||}\le \frac{||A^{-1}||}{1-||A^{-1}E||}$.
\end{theorem}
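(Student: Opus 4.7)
The plan is to factor out $A$: write $A+E = A(I+F)$ with $F := A^{-1}E$, so that, whenever $I+F$ is invertible, $(A+E)^{-1} = (I+F)^{-1}A^{-1}$. Under the hypothesis $\|F\|\le 1$, interpreted strictly as $\|F\|<1$ (the boundary case $\|F\|=1$ makes the claimed upper bounds infinite and is therefore vacuous), the Neumann series $(I+F)^{-1} = \sum_{k\ge 0}(-F)^k$ converges in operator norm, both establishing the invertibility of $A+E$ and yielding the geometric estimate $\|(I+F)^{-1}\|\le \sum_{k\ge 0}\|F\|^k = 1/(1-\|F\|)$.

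The first inequality follows at once from this reduction. Substituting $\|F\| = \|A^{-1}E\|$ and applying submultiplicativity gives
\[ \|(A+E)^{-1}\| = \|(I+F)^{-1}A^{-1}\| \le \|(I+F)^{-1}\|\,\|A^{-1}\| \le \frac{\|A^{-1}\|}{1-\|A^{-1}E\|}. \]

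For the second inequality I would invoke the algebraic identity
\[ (A+E)^{-1} - A^{-1} = (A+E)^{-1}\bigl[A-(A+E)\bigr]A^{-1} = -(A+E)^{-1}E\,A^{-1}, \]
valid whenever both inverses exist. Taking norms, using submultiplicativity, substituting the bound on $\|(A+E)^{-1}\|$ from the first step, and dividing through by $\|A^{-1}\|$ then delivers the stated ratio estimate.

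The main obstacle is really only cosmetic: it is the boundary case $\|A^{-1}E\|=1$, where the Neumann series fails to converge, but where the right-hand sides of both inequalities blow up to $+\infty$, so the claims hold vacuously. Aside from this minor point the proof is an elementary bookkeeping exercise in norm inequalities and essentially reproduces the argument of \cite[Corollary 1.4.19]{S98}.
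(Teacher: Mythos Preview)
The paper does not supply its own proof of this theorem; it merely cites \cite[Corollary 1.4.19]{S98}. Your Neumann-series argument for the first inequality is exactly the standard one and is correct.

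For the second inequality, however, your outline does \emph{not} deliver the bound as stated. Carrying out the steps you describe on the identity $(A+E)^{-1}-A^{-1}=-(A+E)^{-1}EA^{-1}$ gives
\[
\frac{\|(A+E)^{-1}-A^{-1}\|}{\|A^{-1}\|}\;\le\;\frac{\|A^{-1}\|\,\|E\|}{1-\|A^{-1}E\|},
\]
and with the sharper grouping $(A+E)^{-1}-A^{-1}=-(I+F)^{-1}FA^{-1}$ (where $F=A^{-1}E$) you get the cleaner
\[
\frac{\|(A+E)^{-1}-A^{-1}\|}{\|A^{-1}\|}\;\le\;\frac{\|A^{-1}E\|}{1-\|A^{-1}E\|}.
\]
Neither coincides with the paper's printed right-hand side $\dfrac{\|A^{-1}\|}{1-\|A^{-1}E\|}$, and in fact that printed bound is false: take the $1\times 1$ case $A=10$, $E=5$, so that $\|A^{-1}\|=0.1$, $\|A^{-1}E\|=0.5$, and the left-hand ratio equals $\tfrac{1}{3}$, which exceeds the claimed bound $0.1/0.5=0.2$. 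The paper's statement is evidently a typo for the genuine Stewart bound, whose numerator is $\|A^{-1}E\|$ rather than $\|A^{-1}\|$; your argument proves that correct version, but you should not assert that it ``delivers the stated ratio estimate'' without actually checking the algebra.
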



\subsection{Condition number, numerical rank and  generic conditioning profile
}\label{scnpn}


$\kappa (A)=\frac{\sigma_1(A)}{\sigma_{\rho}(A)}=||A||~||A^+||$ is the condition 
number of an $m\times n$ matrix $A$ of a rank $\rho$. Such matrix is 
{\em ill conditioned}
if the ratio $\sigma_1(A)/\sigma_{\rho}(A)$
is large. If the ratio is reasonably bounded,
then the matrix is {\em well conditioned}.  
An $m\times n$ matrix $A$ has a {\em numerical rank} 
$r=\nrank(A)\le \rho=\rank (A)$ 
if the ratios $\sigma_{j}(A)/||A||$
are small for $j>r$ but not for $j\le r$. 


\begin{remark}\label{renul}
One can specify the adjective ``small" 
above as
``smaller than  a fixed positive tolerance"
and similarly specify ``closely" and ``well conditioned".
The specification can be a challenge,
e.g., for the matrix $\diag(1.1^{-j})_{j=0}^{999}$.
\end{remark}

If a well conditioned  $m\times n$ matrix $A$ has a 
rank $\rho<l=\min\{m,n\}$, 
 then all its close neighbors 
have 
numerical rank $\rho$ and  almost
all of them have rank $l$.
Conversely, if a matrix $A$
has a positive
numerical rank $r$,
then the $r$-truncation $A_{r}$,
obtained by setting to 0 
all singular values $\sigma_{j}(A)$ for $j>r$, is
a  well conditioned  rank-$r$ approximation to the matrix $A$
within the error norm bound $\sigma_{r+1}(A)$ (cf. (\ref{eqmins})).
It follows that a matrix is ill conditioned if and only if 
it is close to a matrix having a smaller rank
and
a matrix has a numerical rank $r$
if and only if it can be closely approximated by a well conditioned
matrix having full rank $r$.
Rank-revealing factorizations  of a matrix $A$ that  has a small numerical rank $r$,
but possibly has  a large rank $\rho$,
 produce its rank-$r$ approximations  
at a lower computational cost
 \cite{P00}. 
The
 randomized algorithms
of \cite{HMT11} decrease the computational cost
further. 
An $m\times n$ matrix 
has {\em generic conditioning profile}
if it
has a numerical rank $r$ and if
its leading $i\times i$ blocks are nonsingular and well conditioned for $i=1,\dots,r$.
Such matrix is {\em strongly well conditioned}
if it has full numerical rank  $r=\min\{m,n\}$.


\subsection{Block Gaussian elimination and GENP
}\label{sbgegenpn}


For a 
nonsingular
$2\times 2$ block matrix $A=\begin{pmatrix}
B  &  C  \\
D  &  E
\end{pmatrix}$
with a nonsingular {\em pivot block} $B=A^{(k)}$, 
 define 
$S=S(A^{(k)},A)=E-DB^{-1}C$,
the {\em Schur complement} of $A^{(k)}$ in $A$,
and
the block factorization,

\begin{equation}\label{eqgenp}
\begin{aligned}
A=\begin{pmatrix}
I_k  &  O_{k,r}  \\
DB^{-1}  & I_r
\end{pmatrix}
\begin{pmatrix}
B  &  O_{k,r} \\
O_{r,k}  &  S
\end{pmatrix}
\begin{pmatrix}
I_k  &  B^{-1}C  \\
O_{k,r}  & I_r
\end{pmatrix}
\end{aligned}.
\end{equation} 

Apply this factorization recursively  
to  the pivot block
$B$ and its Schur complement $S$
 and arrive at
 the block Gaussian elimination process,
completely defined 
by the sizes of the pivot blocks.
The recursive process 
 either fails, where its pivot block turns out to be singular,
in particular where it is a vanishing pivot entry of GENP,
or can continue until all pivot blocks become 
nonzero scalars. When this occurs  we arrive at GENP.
Factorization (\ref{eqgenp}) defines the block elimination of the
first $k$
columns of the matrix $A$, 
whereas $S=S(A^{(k)},A)$
is the matrix produced at this elimination step.
Now assume that the pivot dimensions 
$d_1,\dots,d_r$ and $\bar d_1,\dots,\bar d_{\bar r}$ of 
two block elimination processes
 sum to the same integer $k$, that is 
$k=d_1+\dots+d_r=\bar d_1+\dots+\bar d_{\bar r}$.
Then verify that 
both processes produce the same Schur complement $S=S(A^{(k)},A)$.


\begin{theorem}\label{thsch}
In every step of the recursive block factorization process based on (\ref{eqgenp})  
 every diagonal block of a block diagonal factor is either 
a leading block of the input matrix $A$ or the Schur complement $S(A^{(h)},A^{(k)})$
 for some integers $h$ and $k$ such that $0<h<k\le n$ and
$S(A^{(h)},A^{(k)})=(S(A^{(h)},A))^{(h)}$.
\end{theorem}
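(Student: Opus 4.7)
The plan is to proceed by induction on the depth of the recursive block factorization, with the quotient identity for Schur complements as the key lemma. I would maintain the invariant that at every stage the matrix being processed is either $A^{(k)}$ (a leading block of $A$) or a Schur complement $S(A^{(h)},A^{(k)})$ with $0<h<k\le n$. Each application of (\ref{eqgenp}) produces exactly two diagonal blocks in its middle factor---the pivot block, which is a leading block of the matrix currently being processed, and its Schur complement---so the induction reduces to showing that both inherit the required global form.

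The case where the pivot is taken inside a leading block $A^{(k)}$ is immediate: the leading $d\times d$ subblock of $A^{(k)}$ is $A^{(d)}$, and the Schur complement produced is $S(A^{(d)},A^{(k)})$ by definition. The case where the pivot is taken inside a Schur complement $M=S(A^{(h)},A^{(k)})$ is where the quotient identity is needed. To establish it, I would partition $A^{(k)}$ as
\begin{equation*}
A^{(k)} = \begin{pmatrix} A^{(h)} & U \\ V & W \end{pmatrix}, \qquad S(A^{(h)},A^{(k)}) = W - V (A^{(h)})^{-1} U,
\end{equation*}
and partition $A$ analogously with the same leading block $A^{(h)}$. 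The blocks $U$, $V$, $W$ are exactly the leading submatrices of size $h\times(k-h)$, $(k-h)\times h$, $(k-h)\times(k-h)$ of the corresponding off-diagonal and trailing blocks of the partition of $A$; substituting into the defining formula for the Schur complement shows that $S(A^{(h)},A^{(k)})$ equals the corresponding leading block of $S(A^{(h)},A)$, which is the identity asserted in the theorem.

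With the quotient identity in hand, the inductive step closes: applying (\ref{eqgenp}) to $M=S(A^{(h)},A^{(k)})$ with a pivot of dimension $d$ yields a pivot block equal to $S(A^{(h)},A^{(h+d)})$ (the quotient identity applied with indices $(h,h+d)$) and a new Schur complement equal to $S(A^{(h+d)},A^{(k)})$, the latter by transitivity of Schur complements, which is itself a double application of the quotient identity. Both are of the required form, completing the induction.

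The main obstacle is not conceptual but notational: identifying, at each level of the recursion, the global indices within $A$ whose Schur complement matches the locally defined pivot or trailing block. Once the quotient identity is available, this reduces to tracking the running sum of pivot dimensions, and the theorem follows.
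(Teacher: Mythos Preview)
The paper does not supply a proof of this theorem; it only records, in the sentence immediately preceding the statement, the observation that any two block-elimination sequences whose pivot dimensions sum to the same integer $k$ produce the same Schur complement $S(A^{(k)},A)$, and leaves the rest to the reader. Your inductive argument is exactly the natural way to fill in that gap, and it is correct: the invariant you maintain (each block currently under processing is either a leading block $A^{(k)}$ or a Schur complement $S(A^{(h)},A^{(k)})$), together with the leading-block identity for Schur complements and their transitivity, closes the induction. The paper's one-line observation is precisely your transitivity step, so your proof and the paper's implicit argument coincide in substance; you have simply made explicit what the paper asks the reader to verify.

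One remark worth flagging: the displayed identity in the theorem appears to contain a misprint. Dimensionally $S(A^{(h)},A^{(k)})$ is $(k-h)\times(k-h)$, whereas $(S(A^{(h)},A))^{(h)}$ is $h\times h$; the intended right-hand side is $(S(A^{(h)},A))^{(k-h)}$. Your proof establishes exactly this corrected version---you write ``the corresponding leading block'' without committing to the erroneous superscript---so the slip does not affect your argument.
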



\begin{corollary}\label{corec}
The recursive block factorization process based on equation (\ref{eqgenp})  
can be completed by involving no vanishing pivot elements and no singular
pivot blocks
if and only if the input matrix $A$ has 
generic rank profile.
\end{corollary}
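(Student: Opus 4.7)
The plan is to derive the corollary by combining Theorem \ref{thsch} with the classical Schur complement determinant identity
\[
\det(A^{(k)}) \;=\; \det(A^{(h)})\cdot \det\bigl((S(A^{(h)},A))^{(k-h)}\bigr), \qquad 0<h<k,
\]
which follows from the block factorization (\ref{eqgenp}) applied to $A^{(k)}$ with pivot block $A^{(h)}$.

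For the \emph{if} direction, I would assume that $A$ has generic rank profile, so every leading block $A^{(i)}$ with $i\le\rho$ is nonsingular. By Theorem \ref{thsch}, an arbitrary diagonal block appearing as a pivot in the recursion is either a leading block $A^{(j)}$ of $A$ (nonsingular by hypothesis) or a Schur complement of the form $S(A^{(h)},A^{(k)})=(S(A^{(h)},A))^{(h)}$ for some $0<h<k\le n$. Plugging the identity above into the latter case, the determinant of this Schur complement equals $\det(A^{(k)})/\det(A^{(h)})$, a ratio of two nonzero quantities, and hence is itself nonzero. Consequently no pivot block encountered in the recursion is singular, and the process runs to completion.

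For the \emph{only if} direction, I would argue by contrapositive and reduce to the scalar case (GENP), where pivot blocks have size $1$ and partial sums of block dimensions hit every index $1,\dots,n$. Suppose some leading block $A^{(k)}$ with $k\le\rho$ is singular. Let $k$ be the smallest such index, so $A^{(k-1)}$ is nonsingular (trivially if $k=1$). After $k-1$ successful scalar elimination steps, the next pivot is a $1\times 1$ entry equal (up to sign) to $\det(A^{(k)})/\det(A^{(k-1)})$ by the identity, which vanishes; this contradicts successful completion. Since any block recursion can be refined to GENP by further recursing (the block version built on (\ref{eqgenp}) terminates when pivot blocks become scalar pivots, per the discussion preceding Theorem \ref{thsch}), failure of GENP forces failure of the recursive block process too, establishing the contrapositive.

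The main obstacle is clarifying the quantifier on block sizes. Interpreting the statement as "for some choice of pivot sizes the process completes" would be weaker than generic rank profile, since a single block decomposition only certifies nonsingularity of the leading blocks indexed by partial sums. The clean interpretation, which matches the phrasing ``vanishing pivot elements'' and the fact that (\ref{eqgenp}) is applied recursively down to scalar pivots, is that the recursion is carried out to its scalar termination; then the partial sums exhaust $1,\dots,n$ and the contrapositive above goes through. I would briefly flag this in the proof to avoid ambiguity before invoking the two identities.
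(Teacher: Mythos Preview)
Your proposal is correct and follows essentially the same approach as the paper, which proves the corollary in a single line: ``Combine Theorem~\ref{thsch} with equation $\det A=(\det B)\det S$, implied by~(\ref{eqgenp}).'' You have simply unpacked that combination, writing out both directions explicitly, deriving $\det\bigl(S(A^{(h)},A^{(k)})\bigr)=\det(A^{(k)})/\det(A^{(h)})$ from the block factorization, and flagging the quantifier issue on block sizes (a clarification the paper leaves implicit in the text preceding Theorem~\ref{thsch}).
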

\begin{proof}
Combine Theorem \ref{thsch} with equation 
$\det A=(\det B)\det S$,
implied by (\ref{eqgenp}).
\end{proof}

The following result shows that 
the  pivot blocks are not close to singular matrices
where block Gaussian elimination (e.g., GENP) is applied 
to a strongly 
well conditioned input matrix.


\begin{theorem}\label{thnorms} (Cf. \cite[Theorem 5.1]{PQZ13}.)
Assume GENP
or block Gaussian elimination
applied to 
 an
$n\times n$
matrix $A$ and
write $N=||A||$ and $N_-=\max_{j=1}^n ||(A^{(j)})^{-1}||$,
and so $N_-N\ge ||A||~||A^{-1}||\ge 1$. 
Then the absolute values of all pivot elements of GENP 
and the norms of all pivot blocks of 
block Gaussian elimination
do not exceed $N_+=N+N_-N^2$,
whereas the absolute values of the reciprocals of these 
elements and the norms of the inverses of the blocks do not
exceed $N_-$.
\end{theorem}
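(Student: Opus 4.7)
My plan is to reduce the statement to bounds on two elementary objects via Theorem \ref{thsch}, and then to read everything off of the block factorization (\ref{eqgenp}). Concretely, Theorem \ref{thsch} tells us that every pivot block $P$ arising in the recursion is either a leading block $A^{(k)}$ of $A$, or a Schur complement $S=S(A^{(h)},A^{(k)})$ with $0<h<k\le n$. For the GENP case the same dichotomy applies with $k-h=1$, so that each pivot element is the sole entry of such a $1\times 1$ block. Hence it suffices to establish the two claimed bounds in these two cases.

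The first (leading-block) case is immediate: $A^{(k)}$ is a submatrix of $A$, so Fact \ref{faccondsub} applied to $\sigma_1$ gives $\|A^{(k)}\|\le \|A\|=N\le N_+$, while $\|(A^{(k)})^{-1}\|\le N_-$ holds by the very definition of $N_-$. For the Schur-complement case I would partition $A^{(k)}$ as
\[
A^{(k)}=\begin{pmatrix} B & C \\ D & E \end{pmatrix},\qquad B=A^{(h)},
\]
so that (\ref{eqgenp}) applied to $A^{(k)}$ produces $S=E-DB^{-1}C$. Each of $B,C,D,E$ is a submatrix of $A$, so Fact \ref{faccondsub} gives $\max(\|B\|,\|C\|,\|D\|,\|E\|)\le N$, and by the definition of $N_-$ we have $\|B^{-1}\|\le N_-$. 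Then submultiplicativity and the triangle inequality yield
\[
\|S\|\le \|E\|+\|D\|\,\|B^{-1}\|\,\|C\|\le N+N\cdot N_-\cdot N = N_+,
\]
which is the desired norm bound.

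For the reciprocal/inverse bound on $S$ the key observation, and the only point that is not purely mechanical, is that $S^{-1}$ appears as a literal submatrix of $(A^{(k)})^{-1}$. To see this I would invert the three factors of (\ref{eqgenp}) written for $A^{(k)}$: the two block-unit-triangular factors invert trivially to their sign-flipped versions, and the middle block-diagonal factor inverts to $\diag(B^{-1},S^{-1})$. Multiplying the three inverses in the correct order shows that the trailing $(k-h)\times(k-h)$ block of $(A^{(k)})^{-1}$ is exactly $S^{-1}$. Applying Fact \ref{faccondsub} once more to the submatrix $S^{-1}$ of $(A^{(k)})^{-1}$ gives
\[
\|S^{-1}\|\le \|(A^{(k)})^{-1}\|\le N_-,
\]
completing the block case, and specializing to scalar pivots ($k-h=1$) completes the GENP case. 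The only mildly nontrivial step is the identification of $S^{-1}$ with the trailing block of $(A^{(k)})^{-1}$; everything else is the triangle inequality plus the monotonicity of $\sigma_1$ under taking submatrices.
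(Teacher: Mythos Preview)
Your proposal is correct and follows essentially the same route as the paper: reduce via Theorem~\ref{thsch} to leading blocks and Schur complements, bound $\|S\|$ from the explicit formula $S=E-DB^{-1}C$ in (\ref{eqgenp}), and bound $\|S^{-1}\|$ by recognizing it as the southeastern block of the inverse of the ambient matrix. The only difference is cosmetic: the paper states the bounds for the generic factorization (\ref{eqgenp}) of $A$ and then invokes Theorem~\ref{thsch}, whereas you apply the factorization directly to $A^{(k)}$; the content is identical.
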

\begin{proof}
Observe that the inverse $S^{-1}$ of the Schur complement $S$
in (\ref{eqgenp}) is the southeastern block of the inverse $A^{-1}$
and obtain
$||B||\le N$, $||B^{-1}||\le N_-$, and 
$||S^{-1}||\le ||A^{-1}||\le N_-$. Moreover
$||S||\le N+N_-N^2$, due to (\ref{eqgenp}). Now the claimed bound follows 
from Theorem \ref{thsch}.
\end{proof}
\noindent Invert (\ref{eqgenp}) to obtain 
$\begin{aligned}
A^{-1}=\begin{pmatrix}
I_k  &  -B^{-1}C  \\
O_{k,r}  & I_r
\end{pmatrix}
\begin{pmatrix}
B^{-1}  &  O_{k,r} \\
O_{r,k}  &  S^{-1}
\end{pmatrix}
\begin{pmatrix}
I_k  &  O_{k,r}  \\
-DB^{-1}  & I_r
\end{pmatrix}
\end{aligned}$,
 extend this factorization  recursively,
apply it to the inversion of the matrix $A$ and the solution of a linear system $A{\bf x}={\bf b}$,
and extend the above analysis.
 

\begin{remark}\label{reir}
For a strongly nonsingular input matrix $A$
 block factorization (\ref{eqgenp}) can be extended 
to computing the complete recursive factorization, 
which  defines GENP.  By virtue of Theorem \ref{thnorms} 
the norms of the inverses of
all pivot blocks involved in this computation
are at most $N_-$.
If the matrix $A$ is also strongly well conditioned, then
 we have a reasonable upper bound on $N_-$,
and  so
in view of Theorem \ref{thpert}
the inversion of all pivot blocks is numerically
safe. In this case we say that {\em GENP and block Gaussian elimination are locally safe} for the matrix $A$.
In a sense locally safe GENP and
 block Gaussian elimination are at least as safe numerically as
GEPP because a locally safe recursive factorization involves 
neither divisions by absolutely small pivot entries 
(recall that  pivoting has been introduced  
precisely in order to avoid such divisions) 
 nor inversions of ill conditioned pivot blocks. 
 Let us also compare the
 magnification of
the perturbation norm bounds of  Theorem \ref{thpert} in GEPP and
in  the process of recursive factorization, which defines GENP
and block Gaussian elimination.
We observe immediately that in the recursive factorization 
only the factors of the leading blocks  
and the Schur complements
can contribute to this magnification, namely at most $\log_2(n)$
such factors can contribute to the norm of each of the output triangular or block triangular
factors $L$ and $U$. This implies the upper bound $(N_+N_-)^{\log_2(n)}$
on their norms, which can be compared favorably to the sharp upper bound on the growth factor
$2^{n-1}$ for GEPP \cite[page 119]{GL96} and \cite[Theorem 3.4.12]{S98}.
\end{remark}


\section{Singular values of the  matrix products (deterministic estimates)
and GENP with preprocessing}\label{smrc}


Fact \ref{faprepr} reduces the tasks of inverting a nonsingular and well
conditioned matrix $A$
and solving a linear system  $A{\bf x}={\bf b}$ to similar tasks for the
matrix $FAH$ and multipliers $F$ and $H$ of our choice. 
 Remark \ref{reir} motivates the choice for which the 
matrix $FAH$ is strongly nonsingular 
and strongly well conditioned. In Section \ref{ssgnp}
we 
prove that this is likely to occur already where one of the multipliers
$F$ and $H$
 is the identity matrix $I$
 and another one is Gaussian matrix, 
and therefore also
where both $F$ an $H$ are independent Gaussian matrices.
In this section we prepare  background for that proof by 
estimating the norms of the matrices $(FA)_{k,k}=F_{k,m}A_{m,k}$
and $(AH)_{k,k}=A_{k,n}H_{n,k}$ and of their inverses
for general (possibly 
nonrandom) multipliers
$F$ and $H$. 
We will keep writing $M_{s}^T$ and $M_{s}^+$  for $(M_{s})^T$ and $(M_{s})^+$,
respectively,
where $M$ can stand for  $A$, $F$, or $H$, say, 
and where $s$ can be any subscript such as a pair $(k,l)$ or $(A,r)$.
We begin with  two simple lemmas.


\begin{lemma}\label{lepr2} 
Suppose $S$ and $T$ 
are square orthogonal matrices. Then 
$\sigma_{j}(SA)=\sigma_j(AT)=\sigma_j(A)$ for all $j$.
\end{lemma}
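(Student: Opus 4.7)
The plan is to derive both identities directly from the SVD representation (\ref{eqsvd}) of $A$, using the fact that the product of square orthogonal matrices is again orthogonal.

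First I would fix an SVD $A=S_A\Sigma_A T_A^T$ of $A$ as in (\ref{eqsvd}), so that $S_A$ and $T_A$ are square orthogonal and $\Sigma_A$ is the diagonal matrix of singular values. Then I would compute
\[
SA = (S S_A)\,\Sigma_A\,T_A^T, \qquad AT = S_A\,\Sigma_A\,(T^T T_A)^T.
\]
Next I would verify that $S S_A$ and $T^T T_A$ are themselves square orthogonal: for example, $(SS_A)^T(SS_A)=S_A^T S^T S S_A = S_A^T S_A = I$, and analogously for $T^T T_A$. Hence the displayed factorizations are themselves SVDs of $SA$ and $AT$, and by the uniqueness of the singular values they must coincide with those of $A$, giving $\sigma_j(SA)=\sigma_j(AT)=\sigma_j(A)$ for every $j$.

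As an alternative route, I could invoke the variational characterization (\ref{eqmins}): since $\|SB\|=\|B\|$ and $\|BT\|=\|B\|$ for any matrix $B$ and orthogonal $S,T$ (the spectral norm is invariant under orthogonal multiplication), and since $B\mapsto SB$ and $B\mapsto BT$ preserve rank, the minimum of $\|A-B\|$ over matrices of rank at most $s-1$ is unchanged when $A$ is replaced by $SA$ or $AT$. By (\ref{eqmins}) this minimum equals $\sigma_s$, which yields the claim.

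I do not anticipate a real obstacle here; this is a short structural argument, and the only substantive point to verify carefully is that orthogonality is preserved under multiplication of square orthogonal matrices. I would favor the first, SVD-based presentation since it makes the statement transparent and fits the notation already set up in equation (\ref{eqsvd}).
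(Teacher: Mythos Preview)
Your argument is correct. The paper itself does not supply a proof of Lemma~\ref{lepr2}; it simply lists it as one of ``two simple lemmas'' and moves on, so there is nothing to compare against. Your SVD-based derivation is the standard justification and fits the paper's notation from~(\ref{eqsvd}) cleanly; the variational alternative via~(\ref{eqmins}) is also valid but unnecessary here.
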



\begin{lemma}\label{lepr1} 
Suppose $\Sigma=\diag(\sigma_i)_{i=1}^{n}$, $\sigma_1\ge \sigma_2\ge \cdots \ge \sigma_n$,
$F\in \mathbb R^{r\times n}$, $H\in \mathbb R^{n\times r}$.
Then 
$\sigma_{j}(F\Sigma)\ge\sigma_{j}(F)\sigma_n$ and
$\sigma_{j}(\Sigma H)\ge\sigma_{j}(H)\sigma_n$ for all $j$.
If also $\sigma_n>0$, then 
 $\rank (F\Sigma)=\rank (F)$ and $\rank (\Sigma H)=\rank (H)$.
\end{lemma}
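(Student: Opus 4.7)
The plan is to reduce both singular-value inequalities to the standard submultiplicativity bound $\sigma_j(MN)\le \sigma_j(M)\,\|N\|$ together with its dual $\sigma_j(MN)\le \|M\|\,\sigma_j(N)$, which follow from the Courant--Fischer min-max characterization of singular values and are standard (e.g.\ in \cite{GL96}).

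First I would dispose of the trivial case $\sigma_n=0$: both asserted inequalities then read $\sigma_j(F\Sigma)\ge 0$ and $\sigma_j(\Sigma H)\ge 0$, which hold because singular values are nonnegative. So assume $\sigma_n>0$. Then $\Sigma$ is a nonsingular diagonal matrix with $\|\Sigma^{-1}\|=1/\sigma_n$. Write the trivial identity $F=(F\Sigma)\,\Sigma^{-1}$ and apply $\sigma_j(MN)\le \sigma_j(M)\,\|N\|$ with $M=F\Sigma$ and $N=\Sigma^{-1}$ to obtain $\sigma_j(F)\le \sigma_j(F\Sigma)\,\|\Sigma^{-1}\|=\sigma_j(F\Sigma)/\sigma_n$, which rearranges to $\sigma_j(F\Sigma)\ge \sigma_j(F)\,\sigma_n$. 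Dually, from $H=\Sigma^{-1}(\Sigma H)$ the inequality $\sigma_j(MN)\le \|M\|\,\sigma_j(N)$ with $M=\Sigma^{-1}$ and $N=\Sigma H$ yields $\sigma_j(H)\le \sigma_j(\Sigma H)/\sigma_n$ and hence $\sigma_j(\Sigma H)\ge \sigma_j(H)\,\sigma_n$.

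For the rank equalities, when $\sigma_n>0$ the matrix $\Sigma$ is a nonsingular square matrix, so left or right multiplication by $\Sigma$ is a bijection on $\mathbb R^n$ and therefore preserves rank; this immediately gives $\rank(F\Sigma)=\rank(F)$ and $\rank(\Sigma H)=\rank(H)$. Alternatively one may read the rank statement off the singular-value inequality itself together with the reverse bound $\sigma_j(F\Sigma)\le \sigma_j(F)\,\sigma_1$ (applied analogously), since under $\sigma_n>0$ the two bounds show $\sigma_j(F\Sigma)>0$ iff $\sigma_j(F)>0$.

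The only real obstacle is invoking the submultiplicativity bound $\sigma_j(MN)\le \sigma_j(M)\|N\|$; once that is in hand the rest is a single-line rearrangement. There is no randomness or structural subtlety, so I would keep the write-up to a few lines, citing \cite{GL96} for the submultiplicativity inequality rather than reproving it.
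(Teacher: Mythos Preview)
Your argument is correct. The paper itself does not prove this lemma at all---it simply introduces Lemmas~\ref{lepr2} and~\ref{lepr1} with the sentence ``We begin with two simple lemmas'' and states them without proof---so there is nothing to compare against, and your short derivation via $F=(F\Sigma)\Sigma^{-1}$, $H=\Sigma^{-1}(\Sigma H)$ and the standard bound $\sigma_j(MN)\le\sigma_j(M)\,\|N\|$ is exactly the kind of one-line justification the authors had in mind.
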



\noindent The following theorem bounds the norms $||(FA)^+||$ and $||(AH)^+||$
for three matrices $A$, $F$ and $H$.

\begin{theorem}\label{1}
Suppose $A\in \mathbb R^{m\times n}$,
$F\in\mathbb R^{r\times m}$, $H\in\mathbb R^{n\times r}$,
 $r\le \rho$
for 
$\rho=\rank (A)$,  
 $A=S_A\Sigma_AT^T_A$ (cf. (\ref{eqsvd})), 
 $\widehat F=FS_A$, and
$\widehat H=T_A^TH$.
 Then 
\begin{equation}\label{eqfa}
\sigma_j(FA)\ge \sigma_k(A) ~\sigma_j(\widehat F_{r,k})~{\rm for~all}~k\le m~{\rm and~all}~j,
\end{equation}
\begin{equation}\label{eqah}
\sigma_j(AH)\ge \sigma_l(A) ~\sigma_j(\widehat H_{l,r})~{\rm for~all}~l\le n~{\rm and~all}~j.
\end{equation}
\end{theorem}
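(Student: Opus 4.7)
My plan is to use SVD-invariance to replace $A$ by $\Sigma_A$, identify the appropriate leading block of the resulting product as a clean product of a submatrix of $\widehat F$ (respectively $\widehat H$) with a positive diagonal matrix, and then combine Fact~\ref{faccondsub} with Lemma~\ref{lepr1} to convert the block estimate into the desired bound on the full product. Only inequality (\ref{eqfa}) really needs to be worked out; (\ref{eqah}) will follow by the transposed argument.

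For (\ref{eqfa}), I first substitute $A=S_A\Sigma_A T_A^T$. Since $S_A$ and $T_A$ are orthogonal, Lemma~\ref{lepr2} applied to the right factor $T_A^T$ yields
\[
\sigma_j(FA)=\sigma_j(FS_A\Sigma_A)=\sigma_j(\widehat F\,\Sigma_A).
\]
If $k>\rho$ then $\sigma_k(A)=0$ and (\ref{eqfa}) is trivial, so I may assume $k\le\rho$. Because the first $k$ columns of $\Sigma_A$ vanish outside their top $k$ rows, where they carry the diagonal entries $\sigma_1(A),\dots,\sigma_k(A)$, the leading $r\times k$ block of $\widehat F\,\Sigma_A$ factors as
\[
(\widehat F\,\Sigma_A)_{r,k}=\widehat F_{r,k}\,\diag(\sigma_1(A),\dots,\sigma_k(A)).
\]

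I then chain two inequalities. First, Fact~\ref{faccondsub} applied to the submatrix $(\widehat F\,\Sigma_A)_{r,k}$ of $\widehat F\,\Sigma_A$ gives $\sigma_j(\widehat F\,\Sigma_A)\ge\sigma_j((\widehat F\,\Sigma_A)_{r,k})$. Second, Lemma~\ref{lepr1}, applied with the diagonal matrix $\diag(\sigma_1(A),\dots,\sigma_k(A))$ whose smallest entry is $\sigma_k(A)>0$, gives $\sigma_j((\widehat F\,\Sigma_A)_{r,k})\ge\sigma_j(\widehat F_{r,k})\,\sigma_k(A)$. Combining these with the SVD identity from the previous step produces (\ref{eqfa}). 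Inequality (\ref{eqah}) is obtained by the symmetric argument: strip off $S_A$ via Lemma~\ref{lepr2} to reduce $\sigma_j(AH)$ to $\sigma_j(\Sigma_A\widehat H)$, note that for $l\le\rho$ the leading $l\times r$ block equals $\diag(\sigma_1(A),\dots,\sigma_l(A))\,\widehat H_{l,r}$, and combine Fact~\ref{faccondsub} with the second half of Lemma~\ref{lepr1}.

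No step looks like a serious obstacle; the only place that needs care is the dimension bookkeeping in the block identity, since $\widehat F$, $\widehat H$, and $\Sigma_A$ are all possibly rectangular, and one must verify that the argument remains valid (in fact trivially so) once $k$ or $l$ exceeds $\rho$.
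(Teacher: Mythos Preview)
Your proof is correct and follows essentially the same route as the paper: reduce to $\Sigma_A$ via orthogonal invariance (Lemma~\ref{lepr2}), pass to the leading block via Fact~\ref{faccondsub}, and then peel off the diagonal factor via Lemma~\ref{lepr1}. You are slightly more explicit than the paper in writing out the block identity $(\widehat F\,\Sigma_A)_{r,k}=\widehat F_{r,k}\,\diag(\sigma_1(A),\dots,\sigma_k(A))$ and in disposing of the trivial case $k>\rho$, but the argument is the same.
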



\begin{proof}
Note that
 $AH=S_A\Sigma_AT_A^TH$, and so  
$\sigma_j(AH)=\sigma_j(\Sigma_AT_A^TH)=
\sigma_j(\Sigma_A\widehat H)$ for all $j$ 
by virtue of Lemma \ref{lepr2}, because 
$S_A$ is a square orthogonal matrix. 
Furthermore it follows from Fact \ref{faccondsub} that
$\sigma_j(\Sigma_A\widehat H)\ge 
\sigma_j(\Sigma_{l,A}\widehat H_{l,r})$
for all $l\le n$. Combine this bound with the latter equations,
then apply Lemma \ref{lepr1}, and obtain bound (\ref{eqah}).
Similarly deduce bound (\ref{eqfa}). 
\end{proof}


\begin{corollary}\label{cor1}
Keep the assumptions and definitions of Theorem \ref{1}. Then
 
(i)  $\sigma_{r}(AH)\ge 
\sigma_{\rho}(A)\sigma_{r}(\widehat H_{\rho,r})=
\sigma_{r}(\widehat H_{\rho,r})/||A^+||$,

(ii) $||(AH)^+||\le ||A^+||~||\widehat H_{\rho,r}^+||$
if $\rank(AH)=\rank(\widehat H_{\rho,r})=r$, 

(iii) $\sigma_{r}(FA)\ge 
\sigma_{\rho}(A)\sigma_{r}(\widehat F_{r,\rho})=
\sigma_{r}(\widehat F_{r,\rho})/||A^+||$, and

(iv) $||(FA)^+||\le ||A^+||~||\widehat F_{r,\rho}^+||$
if $\rank(FA)=\rank(\widehat F_{r,\rho})=r$.
\end{corollary}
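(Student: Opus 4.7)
The proof will just be a specialization-and-rearrangement argument drawing on Theorem \ref{1} and equation (\ref{eqnrm+}), carried out twice by symmetry. I will take the four claims in order.

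First, for part (i), I will invoke bound (\ref{eqah}) of Theorem \ref{1} with the choice $j=r$ and $l=\rho$. This immediately yields $\sigma_{r}(AH)\ge \sigma_{\rho}(A)\,\sigma_{r}(\widehat H_{\rho,r})$. The second equality in (i) follows from (\ref{eqnrm+}) applied to $A$: since $\rho=\rank(A)$, we have $\sigma_{\rho}(A)=1/\|A^+\|$. Nothing else is needed here.

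For part (ii), I will combine (i) with the rank hypothesis. Under the assumption $\rank(AH)=r$, equation (\ref{eqnrm+}) applied to $AH$ gives $\|(AH)^+\|=1/\sigma_{r}(AH)$, and similarly $\rank(\widehat H_{\rho,r})=r$ gives $\|\widehat H_{\rho,r}^+\|=1/\sigma_{r}(\widehat H_{\rho,r})$. Substituting these into the bound of part (i) and inverting both sides of the resulting inequality produces $\|(AH)^+\|\le \|A^+\|\,\|\widehat H_{\rho,r}^+\|$, which is exactly (ii).

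Parts (iii) and (iv) are handled the same way, but using bound (\ref{eqfa}) in place of (\ref{eqah}), with the specialization $j=r$ and $k=\rho$. The rest of the argument — invoking (\ref{eqnrm+}) to convert the singular-value lower bound into a pseudo-inverse norm upper bound — is identical to the derivation of (ii) from (i).

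I do not foresee any real obstacle: the corollary is essentially a restatement of Theorem \ref{1} at the indices $k=\rho$ or $l=\rho$ combined with the elementary identity (\ref{eqnrm+}). The only point requiring a moment of care is checking that the rank hypotheses in (ii) and (iv) justify applying (\ref{eqnrm+}) to the relevant matrices; once those hypotheses are in place, the inversion step is automatic.
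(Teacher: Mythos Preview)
Your proposal is correct and follows essentially the same route as the paper: specialize Theorem \ref{1} at $j=r$ and $l=\rho$ (respectively $k=\rho$), then invoke (\ref{eqnrm+}) to rewrite $\sigma_\rho(A)$ as $1/\|A^+\|$ and, under the rank hypotheses, to convert the singular-value lower bounds into pseudo-inverse norm upper bounds. The paper's own proof is the same argument in slightly terser form.
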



\begin{proof}
Substitute $j=r$ and $l=\rho$
into bound (\ref{eqah}), recall
(\ref{eqnrm+}), and
obtain part (i).
If $\rank(AH)=\rank(\widehat H_{l,r})=r$,
then apply (\ref{eqnrm+})
to obtain that 
$\sigma_{r}(AH)=1/||(AH)^+||$ and 
$\sigma_{r}(\widehat H_{l,r})=1/||\widehat H_{l,r}^+||$. 
Substitute these equations 
into part (i) and obtain part (ii).
Similarly prove parts (iii) and (iv).
\end{proof}


Let us  extends Theorem \ref{1}  to the leading
 blocks of the matrix products.


\begin{corollary}\label{cogh}
Keep the assumptions and definitions of Theorem \ref{1} and fix
two positive integers $k$ and $l$
such that
$k\le m$,  $l\le n$. 
Then  
(i) $||(FA)_{k,l}^+||\le ||\widehat F_{k,m}^+||~||A_{m,l}^+||\le ||\widehat F_{k,m}^+||~||A^+||$
 if $m\ge n=\rho$ and if the matrices $(FA)_{k,l}$ 
and  $\widehat F_{k,m}$ have  full rank, whereas
(ii) $||(AH)_{k,l}^+||\le ||\widehat H_{n,l}^+||~||A_{k,n}^+||\le
||\widehat H_{n,l}^+||~||A^+||$ if $m=\rho\le n$
and if the matrices  $(AH)_{k,l}$ and $\widehat H_{n,l}$ 
have full rank. 
\end{corollary}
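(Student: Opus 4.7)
The plan is to deduce Corollary~\ref{cogh} from Corollary~\ref{cor1} by writing each leading block of a matrix product as a product of leading blocks: $(FA)_{k,l} = F_{k,m}\,A_{m,l}$ for part~(i) and $(AH)_{k,l} = A_{k,n}\,H_{n,l}$ for part~(ii). The second inequality in each part will then be a short, separate appeal to Corollary~\ref{cointpr}.

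For part~(i) I would proceed as follows. The hypothesis $m \ge n = \rho$ says that $A$ has full column rank, and hence its leftmost $l$-column block $A_{m,l}$ has full column rank $l$. I would then apply Corollary~\ref{cor1}(iv) to the product $F_{k,m}\cdot A_{m,l}$, with $F_{k,m}$ in the role of the multiplier, $A_{m,l}$ in the role of the input, and $k$ in place of the ``$r$'' of that corollary. Under the stated full-rank hypotheses this produces an intermediate bound $\|(FA)_{k,l}^+\| \le \|A_{m,l}^+\|\cdot \|(F_{k,m}\,S_{A_{m,l}})_{k,l}^+\|$, where $S_{A_{m,l}}$ is the left singular matrix of $A_{m,l}$. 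To convert the second factor into $\|\widehat F_{k,m}^+\|$ I would invoke Lemma~\ref{lepr2} (the singular values of $F_{k,m}$, of $F_{k,m}\,S_{A_{m,l}}$, and of $\widehat F_{k,m} = F_{k,m}\,S_A$ all coincide, because $S_{A_{m,l}}$ and $S_A$ are square orthogonal) together with the submatrix comparison supplied by Fact~\ref{faccondsub} and Corollary~\ref{cointpr}. Finally, the second inequality $\|A_{m,l}^+\| \le \|A^+\|$ is Corollary~\ref{cointpr} applied with ``$r$'' $=l$ and ``$r+l$'' $=n$, both $A_{m,l}$ and $A = A_{m,n}$ being full-column-rank in the setting $m \ge n = \rho$.

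Part~(ii) is proved by the same template, transposed. I would factor $(AH)_{k,l} = A_{k,n}\,H_{n,l}$, note that $m = \rho \le n$ forces $A$ (and hence its leading $k$-row block $A_{k,n}$) to have full row rank, apply Corollary~\ref{cor1}(ii) to this product to bound $\|(AH)_{k,l}^+\|$ by $\|A_{k,n}^+\|$ times an appropriate pseudo-inverse norm on the $\widehat H$ side, convert that norm to $\|\widehat H_{n,l}^+\|$ by the same orthogonal-invariance-plus-submatrix bookkeeping used in (i), and close the chain via the row-block analogue of Corollary~\ref{cointpr} (obtained by transposition) giving $\|A_{k,n}^+\| \le \|A^+\|$.

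The main obstacle in both parts is the conversion between the ``auxiliary'' hat-matrix produced by Corollary~\ref{cor1}, which uses the singular matrix of the submatrix $A_{m,l}$ (or $A_{k,n}$), and the $\widehat F$ (or $\widehat H$) appearing in the statement, which uses the singular matrices of the full $A$. Lemma~\ref{lepr2} absorbs the change of orthogonal multiplier, Fact~\ref{faccondsub} and Corollary~\ref{cointpr} handle the difference in block dimensions, and verifying the rank conditions at each substitution is where I expect the real care to be needed. Everything else is routine assembly of the bounds already established in Section~\ref{sosvdi}.
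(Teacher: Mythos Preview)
Your overall plan coincides with the paper's: factor $(FA)_{k,l}=F_{k,m}A_{m,l}$, apply Corollary~\ref{cor1} to this product, and finish with Corollary~\ref{cointpr} for $\|A_{m,l}^+\|\le\|A^+\|$. The paper's proof does exactly this and simply asserts that the application of Corollary~\ref{cor1} already yields the factor $\|\widehat F_{k,m}^+\|$. You are more careful and rightly observe that Corollary~\ref{cor1}, after the substitution $A\mapsto A_{m,l}$, $F\mapsto F_{k,m}$, outputs the auxiliary matrix $F_{k,m}S_{A_{m,l}}$ (built from the SVD of $A_{m,l}$, not of $A$) and bounds $\|(FA)_{k,l}^+\|$ by $\|A_{m,l}^+\|\cdot\|(F_{k,m}S_{A_{m,l}})_{k,l}^+\|$, with a $k\times l$ block rather than the full $k\times m$ matrix $\widehat F_{k,m}=F_{k,m}S_A$.

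Your proposed conversion, however, breaks: Fact~\ref{faccondsub} and Corollary~\ref{cointpr} say that passing to a submatrix can only \emph{decrease} singular values, hence can only \emph{increase} the pseudo-inverse norm. Combined with Lemma~\ref{lepr2} they give
\[
\|(F_{k,m}S_{A_{m,l}})_{k,l}^+\|\ \ge\ \|(F_{k,m}S_{A_{m,l}})^+\|\ =\ \|F_{k,m}^+\|\ =\ \|\widehat F_{k,m}^+\|,
\]
which is the wrong direction for the chain you need. This is not a cosmetic gap. Take $m=2$, $n=\rho=r=k=l=1$, $A=(\epsilon,1)^T$, $F=(1,0)$: then $(FA)_{1,1}=\epsilon$ so $\|(FA)_{1,1}^+\|=1/|\epsilon|$, while $\widehat F_{1,2}=FS_A$ is a unit row vector (so $\|\widehat F_{1,2}^+\|=1$) and $\|A_{2,1}^+\|=\|A^+\|=1/\sqrt{1+\epsilon^2}$; the inequality $1/|\epsilon|\le 1/\sqrt{1+\epsilon^2}$ is false for small $\epsilon$. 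So the step you correctly flag as ``the main obstacle'' is a genuine obstruction that neither your submatrix bookkeeping nor the paper's bare citation of Corollary~\ref{cor1} actually resolves.
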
 


\begin{proof}
Recall that 
$(FA)_{k,l}=F_{k,m}A_{m,l}$
and the matrix $A_{m,l}$ has full rank if $m\ge n=\rho$.
Apply Corollary \ref{cor1}  for $A$ and $F$ replaced by 
$A_{m,l}$ and $F_{k,m}$, respectively,
and obtain that $||(FA)_{k,l}^+||\le ||\widehat F_{k,m}^+||~||A_{m,l}^+||$.
Combine (\ref{eqnrm+}) and Corollary \ref{cointpr} and deduce that 
$||A_{m,l}^+||\le||A^+||$. Combine  the two latter 
inequalities 
to complete the proof of part (i). 
Similarly prove part (ii).
\end{proof}


The following definition formalizes the assumptions of 
Corollaries \ref{cor1} and  \ref{cogh}.

\begin{definition}\label{def1}
Assume the matrices $A$, $F$, $\widehat F$, $H$, and  $\widehat H$ of Theorem \ref{1}.
Then the matrix pair $(A,H)$ (resp. $(F,A)$) has {\em full rank} 
if the matrices $AH$, $\widehat H_{\rho,r}$ (resp. $FA$, $\widehat F_{r,\rho})$)
and $A$ have full rank.
This pair has full rank and is {\em well conditioned} 
if in addition the matrices  $\widehat H_{\rho,r}$ (resp. $\widehat F_{r,\rho})$)
and $A$ are well conditioned,
whereas it has {\em generic rank profile} if  $\rank(A)=\rho$ and
$\rank((AH)_{k,k})=\rank(\widehat H_{\rho,k})=k$ 
(resp. $\rank((FA)_{k,k})=\rank(\widehat F_{k,\rho})=k$) for $k=1,\dots,r$.
 The pair has generic rank profile
and  is {\em strongly well conditioned} if
in addition 
the matrices $\widehat H_{\rho,k}$  
(resp. $\widehat F_{k,\rho}$) for $k=1,\dots,r$
 are well conditioned.
\end{definition}

\begin{remark}\label{regenppr}
Fact \ref{faprepr}, 
Corollary \ref{corec} and Theorem \ref{thnorms} together imply the
following guiding rule.
 Suppose  
$A\in \mathbb R^{m\times n}$,
$F\in\mathbb R^{r\times m}$, $H\in\mathbb R^{n\times r}$,
 $r\le \rho=\rank (A)$, and
 the matrix pair $(A,H)$ for $m\le n$ or $(F,A)$
 for $m\ge n$
has generic rank profile and 
 is  strongly well conditioned. Then
 GENP is locally safe for the
matrix products $AH$ or $FA$, respectively
 (see Remark \ref{reir} on the concept ``locally safe").
\end{remark}


\section{Benefits of using random matrix multipliers 
}\label{sapsr1}


Next we 
define Gaussian matrices and recall their basic properties.
In Section \ref{ssgnp}  we show
that these multipliers are expected to support 
locally safe GENP, and
in Section \ref{srnd} comment on using non-Gaussian random
multipliers. 

\subsection{Gaussian 
matrix, its rank, norm and condition estimates}\label{srrm}




\begin{definition}\label{defrndm}
A matrix is  {\em standard Gaussian random} or just
{\em Gaussian} if it is filled with i.i.d.
Gaussian random
variables  having mean $0$ and variance $1$. 
\end{definition}


\begin{fact}\label{facdgr}
A Gaussian matrix  is rank deficient with probability 0.
\end{fact}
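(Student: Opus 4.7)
The plan is to reduce the claim to the standard fact that a proper algebraic subvariety of $\mathbb{R}^{N}$ has Lebesgue measure zero, combined with the observation that the standard Gaussian distribution on $\mathbb{R}^{mn}$ is absolutely continuous with respect to Lebesgue measure. Write $G$ for an $m\times n$ Gaussian matrix and set $k=\min\{m,n\}$; by transposing if necessary I may assume $k=m\le n$, since transposition does not change rank and preserves the Gaussian distribution.

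First I would observe that $G$ is rank deficient precisely when every $k\times k$ minor of $G$ vanishes. In particular, rank deficiency implies that one fixed minor, say the determinant $p(G)=\det(G_{k,k})$ of the leading $k\times k$ block, vanishes. This $p$ is a polynomial in the $mn$ entries of $G$, and it is not identically zero, because $p(I_k \mid O_{k,n-k})=1$. Hence the set of rank-deficient matrices is contained in the zero locus $Z=\{G\in\mathbb{R}^{m\times n}:p(G)=0\}$ of a nonzero polynomial.

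Next I would invoke the classical fact that the zero set of a nonzero real polynomial in $N$ variables has Lebesgue measure zero in $\mathbb{R}^{N}$; this follows by induction on $N$ (writing $p$ as a polynomial in the last variable with coefficients polynomials in the first $N-1$, and applying Fubini), or by noting that such a zero set is a finite union of smooth submanifolds of lower dimension. Thus $Z$ has Lebesgue measure zero in $\mathbb{R}^{mn}$.

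Finally, the Gaussian measure on $\mathbb{R}^{m\times n}$ induced by the i.i.d.\ entries of $G$ has density $(2\pi)^{-mn/2}\exp(-\|G\|_F^2/2)$ with respect to Lebesgue measure, hence it assigns probability zero to any Lebesgue null set. Therefore the probability that $G$ is rank deficient is at most the Gaussian measure of $Z$, which is zero. There is no real obstacle here; the only point that deserves explicit mention is that $p$ is not identically zero, which is witnessed by a single example such as the leading identity block.
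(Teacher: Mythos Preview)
Your proof is correct and follows essentially the same route as the paper: identify the rank-deficient locus as lying inside a proper algebraic subset of $\mathbb{R}^{mn}$, invoke that such a set has Lebesgue measure zero, and conclude via absolute continuity of the Gaussian measure with respect to Lebesgue measure. The only minor difference is that the paper describes the full rank-deficient variety as the common zero set of all maximal minors, whereas you more economically observe that containment in the zero set of a single nonzero minor already suffices; your version is if anything slightly cleaner, since exhibiting one explicit nonvanishing point of $p$ immediately certifies the hypersurface is proper.
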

\begin{proof}
Assume a rank deficient $m\times n$ matrix where $m\ge n$, say.
Then the determinants 
of all its $n\times n$ submatrices vanish. This implies 
$\begin{pmatrix} m \\n \end{pmatrix}$
polynomial
equations on the entries, that is the rank deficient matrices form
an algebraic variety of a lower dimension in the linear space 
$\mathbb R^{m\times n}$. 
($V$ is an algebraic variety of a dimension $d\le N$
in the space $\mathbb  R^{N}$ if it is defined by $N-d$
polynomial equations and cannot be defined by fewer equations.)
Clearly such a variety has Lebesgue (uniform) and
Gaussian measure 0, both being absolutely continuous
with respect to one another.
\end{proof}


\begin{corollary}\label{codgr} 
 A Gaussian 
matrix has generic rank profile 
with probability 1.
\end{corollary}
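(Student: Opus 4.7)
The plan is to reduce the claim to a straightforward iterated application of Fact \ref{facdgr}, combined with a union bound over the leading blocks.

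First I would apply Fact \ref{facdgr} to the whole matrix: a Gaussian $m\times n$ matrix $G$ has rank $\rho=\min\{m,n\}$ with probability $1$. So what remains is to argue that its leading $i\times i$ blocks $G_{i,i}$ are nonsingular for every $i=1,\dots,\rho$, almost surely simultaneously.

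Next I would use the key observation that each leading block $G_{i,i}$ is itself a standard Gaussian matrix in the sense of Definition \ref{defrndm}: its entries are a subset of the i.i.d.\ standard Gaussian entries of $G$, and therefore they are themselves i.i.d.\ standard Gaussian random variables. Hence Fact \ref{facdgr} applies to each $G_{i,i}$ individually, giving that the event $E_i=\{G_{i,i}\text{ is singular}\}$ has probability $0$.

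Finally, a union bound yields $\Pr(E_1\cup\cdots\cup E_\rho)\le \sum_{i=1}^{\rho}\Pr(E_i)=0$, and intersecting with the full-rank event from the first step finishes the proof. There is essentially no obstacle here: the argument is entirely a bookkeeping exercise once one notices that Fact \ref{facdgr} is invariant under restriction to leading submatrices, and that a finite union of null sets in $\mathbb{R}^{m\times n}$ remains null under the (absolutely continuous) Gaussian measure.
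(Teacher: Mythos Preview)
Your proof is correct and follows exactly the intended route: the paper states this corollary without proof, as an immediate consequence of Fact \ref{facdgr}, and your argument spells out precisely that implication by applying Fact \ref{facdgr} to each leading block (which is itself Gaussian) and taking a finite union bound.
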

Hereafter $\nu_{j,m,n}$ denote the random variables
$\sigma_j(G)$ for Gaussian $m\times n$ matrix $G$
and all $j$, whereas
$\nu_{m,n}$, $\nu_{m,n}^+$, and $\kappa_{m,n}$ denote the 
 random variables
 $||G||$,
 $||G^+||$, and $\kappa(G)=||G||~||G^+||$, respectively.
Note that $\nu_{j,n,m}=\nu_{j,m,n}$, $\nu_{n,m}=\nu_{m,n}$,
$\nu_{n,m}^+=\nu_{m,n}^+$, and $\kappa_{n,m}=\kappa_{m,n}$.

\begin{theorem}\label{thsignorm}
(Cf. \cite[Theorem II.7]{DS01}.)
Suppose 
$h=\max\{m,n\}$, $t\ge 0$,  and
$z\ge 2\sqrt {h}$. 
Then  

Probability$\{\nu_{m,n}>z\}\le
\exp(-(z-2\sqrt {h})^2/2\}$ and 

Probability$\{\nu_{m,n}>t+\sqrt m+\sqrt n\}\le
\exp(-t^2/2)$.
\end{theorem}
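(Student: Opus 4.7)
The result is the classical Davidson--Szarek concentration bound for the spectral norm of a Gaussian matrix, so the plan is to assemble it from the two standard ingredients that together form its usual proof: a Lipschitz-type concentration inequality for Gaussian vectors, and Gordon's bound on the expected spectral norm.

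First I would observe that the map $G \mapsto \|G\|$ from $\mathbb{R}^{m\times n}$ (identified with $\mathbb{R}^{mn}$ equipped with the Frobenius/Euclidean norm) to $\mathbb{R}$ is $1$-Lipschitz: for any $G$ and $E$, the triangle inequality for the spectral norm and the bound $\|E\|\le\|E\|_F$ give $\bigl|\|G+E\|-\|G\|\bigr|\le\|E\|\le\|E\|_F$. Thus $\nu_{m,n}=\|G\|$ is a $1$-Lipschitz function of the standard Gaussian vector formed by the $mn$ entries of $G$.

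Next I would invoke the Gaussian concentration inequality (Borell / Sudakov--Tsirelson / Gaussian isoperimetry): for any $1$-Lipschitz function $f$ of a standard Gaussian vector, $\prob\{f>\mathbb{E} f+t\}\le\exp(-t^{2}/2)$ for all $t\ge 0$. Applied to $f=\|\cdot\|$ this yields $\prob\{\nu_{m,n}>\mathbb{E}\,\nu_{m,n}+t\}\le\exp(-t^2/2)$. Then I would quote Gordon's theorem (a consequence of Slepian's inequality applied to the Gaussian process $(u,v)\mapsto u^{T}Gv$ on the product of unit spheres), which gives the sharp expectation bound
\begin{equation*}
\mathbb{E}\,\nu_{m,n}\ \le\ \sqrt{m}+\sqrt{n}.
\end{equation*}
Combining these two displays delivers the second claim:
$\prob\{\nu_{m,n}>t+\sqrt m+\sqrt n\}\le \prob\{\nu_{m,n}>\mathbb{E}\,\nu_{m,n}+t\}\le\exp(-t^2/2)$.

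For the first claim I would just reduce to the second. Since $h=\max\{m,n\}$, both $\sqrt m$ and $\sqrt n$ are at most $\sqrt h$, so $\sqrt m+\sqrt n\le 2\sqrt h$. Given $z\ge 2\sqrt h$, set $t:=z-2\sqrt h\ge 0$; then $t+\sqrt m+\sqrt n\le t+2\sqrt h=z$, hence $\{\nu_{m,n}>z\}\subseteq\{\nu_{m,n}>t+\sqrt m+\sqrt n\}$, and the second bound gives $\prob\{\nu_{m,n}>z\}\le\exp(-t^2/2)=\exp\bigl(-(z-2\sqrt h)^2/2\bigr)$.

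The only genuinely nontrivial ingredients are the Gaussian concentration inequality and Gordon's expectation bound, both of which are external theorems that the excerpt's citation \cite[Theorem II.7]{DS01} is meant to absorb; if one had to reprove them from scratch the expectation bound via Slepian/Gordon would be the main obstacle, but since the theorem is being stated as a quoted fact I expect the proof to be simply a two-line assembly as above, plus the elementary reduction $\sqrt m+\sqrt n\le 2\sqrt h$.
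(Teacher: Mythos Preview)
Your argument is correct and is precisely the standard Davidson--Szarek proof: Lipschitz property of the operator norm, Gaussian concentration about the mean, Gordon's bound $\mathbb{E}\,\nu_{m,n}\le\sqrt m+\sqrt n$, and then the trivial reduction $\sqrt m+\sqrt n\le 2\sqrt h$ for the first inequality.

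The paper, however, gives no proof of this theorem at all; it simply states it with the citation ``Cf.\ \cite[Theorem~II.7]{DS01}'' and moves on. So there is nothing to compare against: you have supplied the proof that the paper outsources to the reference, and your sketch matches what one finds there.
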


\begin{theorem}\label{thsiguna} 
Suppose 
$m\ge n$, and $x>0$ and write $\Gamma(x)=
\int_0^{\infty}\exp(-t)t^{x-1}dt$ 
and $\zeta(t)=
t^{m-1}m^{m/2}2^{(2-m)/2}\exp(-mt^2/2)/\Gamma(m/2)$. 
Then 

(i) Probability $\{\nu_{m,n}^+\ge m/x^2\}<\frac{x^{m-n+1}}{\Gamma(m-n+2)}$
for $n\ge 2$ and

(ii)  Probability $\{\nu_{m,1}^+\ge x\}\le
(m/2)^{(m-2)/2}/(\Gamma(m/2)x^m)$.
\end{theorem}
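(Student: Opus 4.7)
The plan is to handle the two parts separately, with (ii) being essentially a direct chi-distribution computation and (i) requiring a more delicate smallest-singular-value argument.

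For part (ii), when $n=1$ the matrix $G$ is just an $m$-dimensional standard Gaussian vector, so $\nu_{m,1}^+ = 1/||G||$ and $||G||$ follows the chi distribution with $m$ degrees of freedom, whose density is $r^{m-1}\exp(-r^2/2)/(2^{(m-2)/2}\Gamma(m/2))$. The change of variables $r = t\sqrt m$ shows that the function $\zeta$ appearing in the statement is simply the density of $||G||/\sqrt m$, so phrasing the bound in terms of $\zeta$ is natural. I would then rewrite $\{\nu_{m,1}^+ \ge x\}$ as $\{||G||\le 1/x\}$, express this probability as the integral of the chi density from $0$ to $1/x$, use $\exp(-r^2/2)\le 1$ on that interval, and integrate the remaining monomial $r^{m-1}$ directly; collecting the constants produces the claimed bound $(m/2)^{(m-2)/2}/(\Gamma(m/2)x^m)$.

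For part (i), I would pass to $\sigma_n(G)=1/\nu_{m,n}^+$ and restate the event as $\sigma_n(G)\le x^2/m$. The natural strategy is a distance-to-subspace argument: writing $G = (g_1\mid\dots\mid g_n)$ and invoking the classical identity $((G^TG)^{-1})_{ii} = 1/d_i^{\,2}$ with $d_i = \mathrm{dist}(g_i,\mathrm{span}(g_j: j\neq i))$, one obtains $1/\sigma_n(G)^2 = ||(G^TG)^{-1}|| \le \sum_{i=1}^n 1/d_i^{\,2}$, so it suffices to control the smallest of the $d_i$. Conditioning on the remaining columns, each $d_i$ is the norm of the projection of a standard Gaussian vector onto an almost-surely $(m-n+1)$-dimensional subspace and is therefore chi-distributed with $m-n+1$ degrees of freedom. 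Repeating the chi-density estimate of part (ii) with $m$ replaced by $m-n+1$, combined with a union bound over the $n$ columns, yields the polynomial decay $x^{m-n+1}$; the gamma normalisation then collapses to $1/\Gamma(m-n+2)$ via $\Gamma(k+1)=k\Gamma(k)$.

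The main obstacle will be matching the precise constant $1/\Gamma(m-n+2)$ and absorbing the various factors of $n$, $m$, and $2$ introduced by the rescaling $\sigma_n \le x^2/m$. The polynomial order $x^{m-n+1}$ falls out essentially for free from the distance-to-subspace reduction, but the clean denominator $\Gamma(m-n+2)$ is delicate; if the union-bound bookkeeping turns out to be too loose, I would fall back on Edelman's closed-form density for the smallest singular value of a rectangular Gaussian matrix and integrate it in a neighbourhood of zero, which gives the constant directly.
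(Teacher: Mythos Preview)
Your part~(ii) is precisely the paper's argument: pass from $\nu_{m,1}^+$ to $||G||$, integrate the chi density over $[0,1/x]$, drop the exponential factor, and integrate the remaining monomial. (Your identification of $\zeta$ as the density of $||G||/\sqrt m$ is correct; the paper writes $\mathrm{Probability}\{||G||\le 1/x\}=\int_0^{1/x}\zeta(t)\,dt$ as if $\zeta$ were the density of $||G||$ itself, which carries a scaling slip, and in fact the bare chi computation you describe produces a constant a factor $m^{m/2}$ \emph{smaller} than the stated one, so the stated bound follows a fortiori.)

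For part~(i) the paper gives no argument of its own: it simply cites the proof of Lemma~4.1 in Chen--Dongarra [CD05]. Your distance-to-subspace sketch---bound $1/\sigma_n(G)^2$ by $\sum_i 1/d_i^{\,2}$ via the diagonal of $(G^TG)^{-1}$, observe that each $d_i$ is chi with $m-n+1$ degrees of freedom, then union bound---is the standard mechanism behind such smallest-singular-value tail estimates and is essentially what that reference does. Your caveat about the constant is well placed: the raw trace/union bound picks up stray factors of $n$ and powers of $2$, and folding them into the clean $1/\Gamma(m-n+2)$ requires a short Stirling-type comparison; that is precisely the bookkeeping carried out in [CD05], so your plan and the paper's citation point at the same proof. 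The Edelman-density fallback you mention would also succeed but is heavier than needed here.
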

\begin{proof}
(i) See \cite[Proof of Lemma 4.1]{CD05}.
(ii) $G\in \mathbb R^{m\times 1}$ is a vector of length $m$.
 So, with probability 1,
$G\neq 0$, 
$\rank (G)=1$, $||G^+||=1/||G||$,
 and consequently
Probability $\{||G^+||\ge x\}=$
 Probability $\{||G||\le 1/x\}=\int_{0}^{1/x} \zeta(t) dt$.
Note that $\exp(-mt^2/2)\le 1$, and so
$\int_{0}^{1/x} \zeta(t) dt< c_m\int_{0}^{1/x}t^{m-1}dt=c_m/(mx^m)$ 
where $c_m=m^{m/2}2^{(2-m)/2}/\Gamma(m/2)$.
\end{proof}


The following condition estimates
 from \cite[Theorem 4.5]{CD05}
are quite tight for large values $x$, but for $n\ge 2$ even tighter
estimates (although more involved)
 can be found in \cite{ES05}. (See \cite{D88} and  \cite{E88}
on the early study.)


\begin{theorem}\label{thmsiguna}   
 With probability 1, we have
$\kappa_{m,1}=1$. If $m\ge n\ge 2$, then
 
Probability$\{\kappa_{m,n}m/(m-n+1)>x\}\le \frac{1}{2\pi}(6.414/x)^{m-n+1}$
for $x\ge m-n+1$.
\end{theorem}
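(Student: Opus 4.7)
The plan is to split on the value of $n$. The case $n=1$ is immediate: by Fact \ref{facdgr} a standard Gaussian $m\times 1$ matrix $G$ is a nonzero vector with probability one, so $\rank(G)=1$, $\sigma_1(G)=\|G\|$, and $\|G^+\|=1/\|G\|$ by equation (\ref{eqnrm+}). Therefore $\kappa_{m,1}=\|G\|\cdot\|G^+\|=1$ almost surely, as claimed. The real content is the tail bound for $n\ge 2$, which I would prove by reducing a two-sided statement about $\sigma_1/\sigma_n$ to a one-sided statement about $\sigma_n$ and then using the exact Wishart density.

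For $n\ge 2$ I would proceed via the joint eigenvalue density of $W=G^TG$, which is Wishart $W_n(m,I_n)$ and whose ordered eigenvalues $\lambda_1\ge\cdots\ge\lambda_n>0$ equal $\sigma_i(G)^2$. The joint density has the form $c_{m,n}\prod_{i<j}(\lambda_i-\lambda_j)\prod_i\lambda_i^{(m-n-1)/2}e^{-\lambda_i/2}$, with $c_{m,n}$ an explicit product of Gamma factors. The event $\{\kappa_{m,n}m/(m-n+1)>x\}$ becomes $\{\lambda_1/\lambda_n>y^2\}$ for $y=x(m-n+1)/m$. I would change variables to $(\lambda_n,\lambda_1/\lambda_n,\lambda_2,\dots,\lambda_{n-1})$, pull the ``hard edge'' factor $\lambda_n^{(m-n-1)/2}$ out of the integral, and use $\lambda_1-\lambda_j\le \lambda_1$ together with $e^{-\lambda_i/2}\le 1$ to bound the Vandermonde and Gaussian factors crudely in the bad region. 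Integrating $\lambda_n^{(m-n-1)/2}$ from $0$ to some cutoff, combined with the $y^2$-factor from the ratio constraint, produces the characteristic power $(1/y)^{m-n+1}$; rewriting $y$ in terms of $x$ and the normalizing constant $c_{m,n}$ yields the desired $(c/x)^{m-n+1}$ form with an explicit $c$.

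The main obstacle is the final bookkeeping: identifying the numerical constant $6.414$ and the factor $1/(2\pi)$. This requires bounding the ratio of Gamma functions appearing in $c_{m,n}$ uniformly in $m$ and $n$ by means of Stirling's formula and of the elementary inequality $\Gamma(m-n+2)\ge (m-n+1)!$, after which the constant $6.414$ emerges as the worst-case prefactor. As a purely qualitative alternative one can combine Theorems \ref{thsignorm} and \ref{thsiguna} through the union bound $P\{\nu_{m,n}\nu_{m,n}^+>\alpha\beta\}\le P\{\nu_{m,n}>\alpha\}+P\{\nu_{m,n}^+>\beta\}$, choosing $\alpha\approx 2\sqrt m$ so that Theorem \ref{thsignorm} kills the first term and $\beta=y/\alpha$ so that Theorem \ref{thsiguna}(i) kills the second; this route gives a bound of the correct shape $C/x^{m-n+1}$ but produces a suboptimal constant, which is why the sharp statement of \cite[Theorem 4.5]{CD05} cited here uses the direct joint-density computation described above.
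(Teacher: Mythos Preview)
The paper does not actually prove this theorem: the sentence preceding it attributes the $n\ge 2$ tail bound to \cite[Theorem 4.5]{CD05} and then states the result without argument. So there is nothing to compare your $n\ge 2$ sketch against in the paper itself; what you have outlined---working from the joint Wishart eigenvalue density, isolating the hard-edge factor $\lambda_n^{(m-n-1)/2}$, and tracking the Gamma constants via Stirling---is precisely the Chen--Dongarra computation that the paper is citing. Your remark that the union-bound route through Theorems \ref{thsignorm} and \ref{thsiguna} gives the correct exponent but a worse constant is also accurate and explains why the paper quotes the sharper \cite{CD05} bound rather than deriving a cruder one from its own earlier statements.

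For the $n=1$ case your argument is correct and coincides with the reasoning the paper already records inside the proof of Theorem \ref{thsiguna}(ii): a Gaussian column is nonzero almost surely, hence has rank $1$, and then $\|G^+\|=1/\|G\|$ by (\ref{eqnrm+}), giving $\kappa_{m,1}=1$.
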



\begin{corollary}\label{cogfrwc} 
 A Gaussian 
matrix has generic rank profile 
with probability 1
and is expected to be well conditioned.
\end{corollary}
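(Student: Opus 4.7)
The plan is to combine two previously established results, one for each clause of the corollary. The statement is essentially a restatement of prior machinery packaged for later use in Section \ref{ssgnp}.

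For the generic rank profile claim, I would simply appeal to Corollary \ref{codgr}, which already asserts exactly this: a Gaussian matrix has generic rank profile with probability $1$. (Its proof proceeds by observing that every leading block of a Gaussian matrix is itself Gaussian, so by Fact \ref{facdgr} each of the finitely many leading blocks is nonsingular with probability $1$, and a union bound finishes.) Nothing further needs to be shown for this clause.

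For the ``expected to be well conditioned'' clause, I would invoke Theorem \ref{thmsiguna}. Recall the paper's convention (Section \ref{sdef}) that ``expected'' means ``with probability $1$ or close to $1$,'' so it suffices to show that $\kappa_{m,n}$ is unlikely to be large. In the degenerate case $n=1$ or $m=1$, Theorem \ref{thmsiguna} gives $\kappa_{m,1}=1$ with probability $1$, so the claim is immediate. For $m\ge n\ge 2$, choose any moderate $x > 6.414$; then Theorem \ref{thmsiguna} yields
\[
\text{Probability}\Bigl\{\kappa_{m,n} > \frac{(m-n+1)\,x}{m}\Bigr\} \le \frac{1}{2\pi}\Bigl(\frac{6.414}{x}\Bigr)^{m-n+1},
\]
which is small already for modest $x$ and decays geometrically as $x$ grows. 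In other words, $\kappa_{m,n}$ is bounded by a quantity of order $m-n+1$ (or even a small constant multiple thereof) with overwhelming probability, matching the informal notion of ``well conditioned.''

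There is no real obstacle here: both assertions are corollaries of cited results, and the only minor wrinkle is interpreting the tail bound of Theorem \ref{thmsiguna} in light of the paper's informal definition of ``well conditioned.'' The main value of stating the corollary is to have a convenient reference for Section \ref{ssgnp}, where Gaussian multipliers $F$ or $H$ will be substituted into the deterministic bounds of Corollaries \ref{cor1} and \ref{cogh}; the corollary confirms that those multipliers furnish, with high probability, the full-rank and well-conditioned hypotheses required there.
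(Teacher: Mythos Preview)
Your proposal is correct and matches the paper's proof exactly: the paper's entire argument is the one-line ``Combine Corollary \ref{codgr} and Theorem \ref{thmsiguna},'' which is precisely what you do, with some added unpacking of the tail bound. Your parenthetical remark that $\kappa_{m,n}$ is ``of order $m-n+1$'' is a bit loose in the square case (where the bound grows with $n$), but this does not affect the validity of the argument under the paper's informal notion of ``well conditioned.''
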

\begin{proof}
Combine Corollary \ref{codgr} and Theorem \ref{thmsiguna}.
\end{proof}


 

\subsection{Supporting  GENP with Gaussian multipliers
}\label{ssgnp}

\begin{lemma}\label{lepr3}
Suppose $H$ is Gaussian matrix, 
$S$ and $T$ are orthogonal matrices, $H\in \mathbb R^{m\times n}$,
$S\in \mathbb R^{k\times m}$, and $T\in \mathbb R^{n\times k}$
for some $k$, $m$, and $n$.
Then $SH$ and $HT$ are Gaussian matrices.
\end{lemma}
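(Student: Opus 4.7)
The plan is to invoke the rotational invariance of the standard multivariate Gaussian distribution, applied columnwise (for $SH$) and rowwise (for $HT$).

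First I would recall the elementary fact that if $\mathbf{g}\in\mathbb R^m$ has distribution $N(\mathbf 0,I_m)$ and $S\in\mathbb R^{k\times m}$ satisfies $SS^T=I_k$, then $S\mathbf g$ is Gaussian with mean $\mathbf 0$ and covariance $SS^T=I_k$, so $S\mathbf g\sim N(\mathbf 0,I_k)$. This is the only analytic ingredient. To apply it to $SH$, I would write $H$ columnwise as $H=(\mathbf h_1\mid\cdots\mid\mathbf h_n)$. By Definition~\ref{defrndm} the columns $\mathbf h_j$ are independent $N(\mathbf 0,I_m)$ vectors, so $SH=(S\mathbf h_1\mid\cdots\mid S\mathbf h_n)$ has columns that are independent (as functions of independent vectors) and each distributed as $N(\mathbf 0,I_k)$. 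Hence the entries of $SH$ are i.i.d.\ $N(0,1)$, which is exactly the definition of a Gaussian matrix.

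For $HT$ I would pass to the transpose and repeat the argument: $(HT)^T=T^T H^T$, where $H^T\in\mathbb R^{n\times m}$ is itself Gaussian, and $T^T\in\mathbb R^{k\times n}$ satisfies $T^T(T^T)^T=T^TT=I_k$. Applying the columnwise argument to $T^TH^T$ yields that $T^TH^T$ is Gaussian of size $k\times m$, and therefore $HT$ is Gaussian of size $m\times k$.

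One small subtlety to address is which of the two alternatives in the paper's definition of an orthogonal matrix (namely $Q^TQ=I$ or $QQ^T=I$) is the relevant one here. Since $SH$ has $k$ rows we need $SS^T=I_k$, which is the meaningful condition when $k\le m$; since $HT$ has $k$ columns we need $T^TT=I_k$, which is meaningful when $k\le n$. In each case the stated shape of $S$ or $T$ selects the appropriate condition, so there is no real obstacle—just a line of bookkeeping to make sure the covariance computation $SS^T=I_k$ (respectively $T^TT=I_k$) is the one being used. I do not expect any serious difficulty; the whole proof is a two- or three-line application of rotational invariance.
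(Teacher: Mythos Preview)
The paper states this lemma without proof, treating it as a standard fact. Your argument is correct and is exactly the natural one: rotational invariance gives $S\mathbf g\sim N(\mathbf 0,SS^T)=N(\mathbf 0,I_k)$ for each column, independence of the columns of $H$ carries over to the columns of $SH$, and transposition reduces the $HT$ case to the same computation. Your remark about which of the two orthogonality identities ($SS^T=I_k$ versus $S^TS=I_m$) is the operative one is a useful clarification; in the paper's actual applications $S=S_A$ and $T=T_A$ are square, so the issue does not arise there.
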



\begin{theorem}\label{thrkpg}
Suppose $A\in \mathbb R^{m\times n}$, $F\in \mathbb R^{r\times m}$,
$H\in \mathbb R^{m\times r}$, $F$ and $H$ are Gaussian matrices, 
and $\rank (A)=\rho$.
Then $\rank (FA)=\rank(AH)=\min \{r,\rho\}$ with probability 1. 
\end{theorem}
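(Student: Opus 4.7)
The plan is to reduce the rank computation of $FA$ and $AH$ to that of a (smaller) Gaussian block, and then invoke Fact \ref{facdgr}. Both cases are symmetric, so I will describe the argument for $AH$ in detail and indicate the obvious dual for $FA$.

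First I would bring in the SVD $A = S_A \Sigma_A T_A^T$ from (\ref{eqsvd}), write $\widehat H = T_A^T H$, and observe that since $S_A$ is a square orthogonal (hence nonsingular) matrix,
\begin{equation*}
\rank(AH) \;=\; \rank(S_A \Sigma_A T_A^T H) \;=\; \rank(\Sigma_A \widehat H).
\end{equation*}
The matrix $\Sigma_A$ has its only nonzero entries in the leading $\rho \times \rho$ diagonal block $\widehat \Sigma_A$, which is nonsingular. Therefore $\Sigma_A \widehat H$ is zero outside its top $\rho$ rows, and those top $\rho$ rows equal $\widehat \Sigma_A \widehat H_{\rho, r}$. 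Since $\widehat \Sigma_A$ is nonsingular, this yields
\begin{equation*}
\rank(AH) \;=\; \rank\bigl(\widehat \Sigma_A \widehat H_{\rho, r}\bigr) \;=\; \rank\bigl(\widehat H_{\rho, r}\bigr).
\end{equation*}

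Next I would use Lemma \ref{lepr3}: since $T_A^T$ is orthogonal and $H$ is Gaussian, the product $\widehat H = T_A^T H$ is itself a Gaussian matrix. Its leading $\rho \times r$ block $\widehat H_{\rho, r}$ is then a Gaussian matrix in its own right (each entry is an i.i.d.\ standard Gaussian variable). By Fact \ref{facdgr}, a Gaussian $\rho \times r$ matrix has full rank $\min\{\rho, r\}$ with probability $1$, which gives $\rank(AH) = \min\{\rho, r\}$ with probability $1$, as claimed.

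The argument for $\rank(FA) = \min\{r, \rho\}$ is entirely analogous: set $\widehat F = F S_A$, note that $\widehat F$ is Gaussian by Lemma \ref{lepr3}, reduce $\rank(FA) = \rank(\widehat F \Sigma_A)$ to $\rank(\widehat F_{r, \rho} \widehat \Sigma_A) = \rank(\widehat F_{r, \rho})$, and apply Fact \ref{facdgr} to the Gaussian block $\widehat F_{r, \rho}$. There is no real obstacle in this proof; the only point that requires a moment's care is the reduction from $\Sigma_A \widehat H$ (respectively $\widehat F \Sigma_A$) to the leading $\rho \times r$ (respectively $r \times \rho$) submatrix of $\widehat H$ (respectively $\widehat F$), which is immediate from the block structure of $\Sigma_A$ and the nonsingularity of $\widehat \Sigma_A$.
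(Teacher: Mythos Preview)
Your proof is correct and follows essentially the same approach as the paper: reduce via the SVD and orthogonal invariance (Lemma~\ref{lepr3}) to the rank of a $\rho\times r$ (or $r\times\rho$) Gaussian submatrix, then invoke Fact~\ref{facdgr}. The only cosmetic differences are that the paper treats $FA$ first and phrases the reduction via the projector $D_{\rho}=\diag(I_{\rho},O)$ rather than via the nonsingular block $\widehat\Sigma_A$, which is the same computation.
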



\begin{proof}
Suppose $A=S_{A}\Sigma_{A}T_{A}^T$ is SVD of (\ref{eqsvd}).
Then $FA=FS_{A}\Sigma_{A}T_{A}^T=G\Sigma_{A}T_{A}^T$
where $G=FS_A$ is Gaussian $r\times m$ matrix by virtue of 
Lemma \ref{lepr3}.
Clearly $\rank(FA)=\rank(G\Sigma_{A}T_{A}^T)=\rank(G\Sigma_{A})$
because $T_{A}$ is a square orthogonal matrix.
Moreover $\rank(G\Sigma_{A})=\rank(GD_{\rho})$ 
where $D_{\rho}=\diag(I_{\rho},O_{m-\rho,n-\rho}))$, and so
$GD_{\rho}$ is Gaussian  $r\times \rho$ matrix because it is a submatrix of 
the Gaussian matrix $G$.  
Therefore $\rank(FA)=\rank(GD_{\rho})$ is equal to $\min\{r,\rho\}$
  with probability 1  by virtue of
Fact \ref{facdgr}. Similarly obtain that $\rank(AH)=\min\{r,\rho\}$
with probability 1. 
\end{proof}


\begin{corollary}\label{cor10}
Keep the assumptions and definitions of Theorem \ref{1}. 
Suppose the matrix $A$ has full rank $\rho=\min\{m,n\}$, 
 $k\le r\le\rho$,
and $F=FS_A$
and $H=T_A^TH$ are Gaussian matrices. 
Then (i) so
are the matrices $\widehat F$,
$\widehat H$ and 
 all their submatrices, in particular 
$\widehat F_{k,\rho}$ and 
 $\widehat H_{\rho,k}$, and
 (ii) with probability $1$,  
$\rank((AH)_{k,k})=k$ if $m\le n$, $\rank((FA)_{k,k})=k$ if $m\le n$, and
$\rank(\widehat H_{\rho,k})=\rank(\widehat F_{k,\rho})=k$.
\end{corollary}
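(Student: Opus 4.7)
My approach splits the corollary into parts (i) and (ii). Part (i) is essentially an application of Lemma \ref{lepr3} together with the observation that submatrices of Gaussian matrices are Gaussian, while part (ii) will combine part (i) with Fact \ref{facdgr} and Theorem \ref{thrkpg}.

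For part (i): by hypothesis $F$ and $H$ are Gaussian, and $S_A$ and $T_A^T$ are square orthogonal. Lemma \ref{lepr3} then gives that $\widehat F = FS_A$ and $\widehat H = T_A^T H$ are Gaussian. Every submatrix of a Gaussian matrix is itself Gaussian, since the selected entries still form an i.i.d.\ standard Gaussian family on their own; in particular $\widehat F_{k,\rho}$ and $\widehat H_{\rho,k}$ are Gaussian.

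For part (ii), the two rank assertions $\rank(\widehat F_{k,\rho}) = \rank(\widehat H_{\rho,k}) = k$ are immediate from part (i) and Fact \ref{facdgr}: each is a Gaussian matrix with short side $k \le \rho$, hence has rank $k$ with probability one. To handle $\rank((AH)_{k,k}) = k$ when $m \le n$, I would factor $(AH)_{k,k} = A_{k,n} H_{n,k}$. Since $\rho = \min\{m, n\} = m$, the matrix $A$ has linearly independent rows, and any subset of $k \le m$ of them is still linearly independent, so $\rank(A_{k,n}) = k$. The matrix $H_{n,k}$ is a Gaussian submatrix of $H$ of size $n \times k$. Applying Theorem \ref{thrkpg} with the rank-$k$ matrix $A_{k,n}$ in the role of ``$A$'' and the Gaussian $H_{n,k}$ in the role of ``$H$'' then gives $\rank(A_{k,n} H_{n,k}) = \min\{k, k\} = k$ with probability one. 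The claim about $(FA)_{k,k}$ is handled symmetrically, using full column rank of $A$ (which requires $m \ge n$, so the ``$m \le n$'' printed with the $(FA)_{k,k}$ assertion appears to be a typo) and Theorem \ref{thrkpg} applied on the left with $F_{k,m}$ Gaussian and $A_{m,k}$ of rank $k$.

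The proof is really just a careful assembly of Lemma \ref{lepr3}, Fact \ref{facdgr}, and Theorem \ref{thrkpg}; the only step requiring a moment of thought is the propagation of the full rank of $A$ to its leading $k \times n$ (resp.\ $m \times k$) block, which is available exactly when the ``long'' side of $A$ is aligned with the correct dimension ($m \le n$ for rows, $m \ge n$ for columns). Everything else is bookkeeping.
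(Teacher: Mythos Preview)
Your proof is correct and follows essentially the same route as the paper: Lemma \ref{lepr3} for part (i), then Fact \ref{facdgr} for the ranks of $\widehat F_{k,\rho}$ and $\widehat H_{\rho,k}$, and the factorization $(AH)_{k,k}=A_{k,n}H_{n,k}$ together with Theorem \ref{thrkpg} and the full-row-rank of $A_{k,n}$ (when $m\le n$) for the remaining rank claims. You also correctly flag the typo in the $(FA)_{k,k}$ clause; the paper's own proof indeed establishes that case under the assumption $k\le\rho=n$, i.e.\ $m\ge n$.
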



\begin{proof}
If $H$ and $F$ are Gaussian matrices, then so are the matrices
$\widehat H$ and $\widehat F$ by virtue of Lemma \ref{lepr3}.
Consequently so are all their submatrices. This proves
 parts (i) and  by virtue of Fact \ref{facdgr} also implies the
 equations $\rank(\widehat H_{\rho,k})=\rank(\widehat F_{k,\rho})=k$ of part (ii). 
Now recall that $(AH)_{k,k}=A_{k,n}H_{n,k}$,
and hence $\rank ((AH)_{k,k})=\rank(A_{k,n}H_{n,k})$.
This is equal to $\rank(A_{k,n})$ with probability 1 
by virtue of Theorem \ref{thrkpg} because
$H_{n,k}$ is a Gaussian matrix and because $k\le \rho\le n$.
Finally obtain that 
 $\rank(A_{k,n})=k$ for $k\le \rho=m$, and so 
$\rank((AH)_{k,k})=k$.
Similarly prove that $\rank ((FA)_{k,k})=k$ for $k\le \rho=n$.
\end{proof}


\begin{corollary}\label{cogmforalg}
The  choice of 
Gaussian multipliers 
$F$ where $m\le n$ or $H$ where $m\ge n$ is expected 
to satisfy the assumptions of Remark \ref{regenppr}
(thus supporting application of 
GENP
to the matrix $FA$ where $m\le n$ or $AH$ 
where $m\ge n$)
provided that the $m\times n$ matrix $A$
is nonsingular and well conditioned.
\end{corollary}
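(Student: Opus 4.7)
The plan is to verify, for each of the two cases of the statement, the hypotheses of Remark \ref{regenppr} on the relevant matrix pair: generic rank profile and strong well-conditioning in the sense of Definition \ref{def1}. Once both hold, Remark \ref{regenppr} yields at once that GENP is locally safe for $FA$ in the $m \le n$ case and for $AH$ in the $m \ge n$ case.

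First I would dispose of the rank-profile hypothesis. Since $A$ is nonsingular, $\rho = \rank(A) = \min\{m,n\}$. In each case the multiplier is Gaussian, so Lemma \ref{lepr3} gives that $\widehat F = FS_A$ (in the $m \le n$ case) or $\widehat H = T_A^T H$ (in the $m \ge n$ case) is Gaussian, and so is every leading sub-block thereof. An application of Corollary \ref{cor10}, together with Fact \ref{facdgr}, then yields with probability one the equalities $\rank((FA)_{k,k}) = \rank(\widehat F_{k,\rho}) = k$, respectively $\rank((AH)_{k,k}) = \rank(\widehat H_{\rho,k}) = k$, for every $k = 1, \ldots, r$. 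That is exactly the generic-rank-profile clause of Definition \ref{def1}.

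For strong well-conditioning I would, for each $k = 1, \ldots, r$, bound the condition number of the leading $k \times k$ block. The spectral-norm side is handled by submultiplicativity, $||(FA)_{k,k}|| \le ||F||\,||A||$ (resp. $||(AH)_{k,k}|| \le ||A||\,||H||$), with $||F||$ or $||H||$ controlled by Theorem \ref{thsignorm}. The pseudo-inverse side is handled by Corollary \ref{cogh}, giving $||(FA)_{k,k}^+|| \le ||\widehat F_{k,m}^+||\,||A^+||$ (resp. $||(AH)_{k,k}^+|| \le ||\widehat H_{n,k}^+||\,||A^+||$); the Gaussian pseudo-inverse norms $||\widehat F_{k,m}^+||$ and $||\widehat H_{n,k}^+||$ are then bounded with high probability via Theorems \ref{thsiguna} and \ref{thmsiguna}. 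A union bound over the $r$ values of $k$ keeps the total failure probability close to zero, and combined with the assumed well-conditioning of $A$ it delivers uniform condition-number bounds on all required leading blocks.

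The subtle point, and the main obstacle, is a shape mismatch: Corollary \ref{cogh} and Remark \ref{regenppr} are phrased for the opposite pairing of aspect ratio and multiplier side (left multiplier $F$ with $m \ge n$, right multiplier $H$ with $m \le n$) from the one appearing in the corollary above. To invoke those results in both cases of the present statement I would pass to the transpose where needed, using that the transpose of a Gaussian matrix is Gaussian, that $\sigma_j$, spectral norm and pseudo-inverse norm are transpose-invariant, and that $(A_{k,l})^T = (A^T)_{l,k}$ identifies leading blocks up to transposition. Once this bookkeeping is carried out, the proof reduces to a direct appeal to Remark \ref{regenppr}.
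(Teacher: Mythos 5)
Your main argument follows essentially the same route as the paper, whose entire proof is ``combine Corollaries \ref{cogfrwc} and \ref{cor10}'': generic rank profile of the pair holds with probability $1$ by Lemma \ref{lepr3}, Fact \ref{facdgr} and Corollary \ref{cor10}, and the required well-conditioning holds because the submatrices $\widehat F_{k,\rho}$ and $\widehat H_{\rho,k}$ are themselves Gaussian, hence expected to be well conditioned by Theorem \ref{thmsiguna} (Corollary \ref{cogfrwc}). One remark on your middle paragraph: the assumptions of Remark \ref{regenppr} are exactly the conditions of Definition \ref{def1}, which ask only that the hat submatrices (and $A$) be well conditioned, not that the leading blocks $(FA)_{k,k}$ or $(AH)_{k,k}$ of the products be; so your bounds via Corollary \ref{cogh}, Theorems \ref{thsignorm}, \ref{thsiguna}, \ref{thmsiguna} and a union bound prove more than the statement requires. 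That extra work is harmless and in fact makes explicit the quantitative content that the paper leaves implicit in the word ``expected.''

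The genuinely flawed step is your treatment of the ``shape mismatch.'' Since $A$ is assumed nonsingular, $m=n$, so the discrepancy between the corollary's wording ($F$ for $m\le n$, $H$ for $m\ge n$) and Remark \ref{regenppr} / Corollary \ref{cogh} ($F$ for $m\ge n$, $H$ for $m\le n$) is vacuous: it is a slip in the statement's wording, and there is nothing to fix. If instead you took $m<n$ literally, transposition would not rescue the swapped pairing: $(FA)^T=A^TF^T$ is a \emph{right} multiplier applied to a \emph{tall} full-column-rank matrix, a configuration covered by neither case (i) nor case (ii) of Corollary \ref{cogh}, so the bookkeeping you describe cannot be carried out. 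Indeed the swapped pairing is simply false for genuinely rectangular inputs: take $A=(O_{m,n-m}~|~I_m)$ with $m<n$, which has orthonormal rows and is perfectly conditioned, yet $(FA)_{k,k}=F_{k,m}A_{m,k}=O$ for every $F$ and all $k\le n-m$, so GENP on $FA$ fails with certainty. The correct reading is the pairing of Remark \ref{regenppr} (equivalently, the square case $m=n$), after which your first two paragraphs already complete the proof.
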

\begin{proof}
Combine Corollaries \ref{cogfrwc}
and \ref{cor10}.
\end{proof}


\subsection{Structured random multipliers}\label{srnd}


Given $n\times n$ matrices $A$, 
$F$
and  $H$, we need $2n^3-n^2$ flops 
to compute each of the products $FA$ and $AH$,
but we only need order of $n^2\log (n)$ flops
 to compute such products where
 $F$ and $H$ are circulant matrices (cf. \cite{p01}).
Furthermore we need just $n$ random parameters to define 
a Gaussian circulant  $n\times n$ matrix $C=(c_{i-j\mod n})_{i,j=0}^{n-1}$,
expected to be very well conditioned \cite{PQa}.
Can we extend our results to these random  multipliers? 
The proof and consequently the claim of Fact \ref{facdgr} 
can be immediately extended,
and so we can still
satisfy 
 with probability 1
the generic rank profile assumption
of  Remark \ref{regenppr}.
Moreover we can satisfy that assumption
with probability close to 1 even where we 
fill the multipliers $F$ and $H$ 
with i.i.d. random variables defined
under the discrete uniform probability distribution
over a fixed sufficiently large finite set
(see Appendix \ref{srsnrm}).
We cannot extend our proof that the pair  
$(T_A,H)$ is strongly well conditioned, however,
because we cannot extend Lemma \ref{lepr3} to this case.
Nevertheless empirically circulant random multipliers
turn out to support GENP
 quite strongly
(see our next section and also compare \cite{HMT11}, and \cite{M11}
on randomized low-rank approximation of a matrix). 
By engaging simultaneously 
 two independent structured random multipliers $F$ and $H$
we may enhance their power.
 

\section{Numerical Experiments}\label{sexp}

 
Our numerical experiments with general, Hankel, Toeplitz and circulant 
random  matrices 
have been performed in the Graduate Center of the City University of New York 
on a Dell server with a dual core 1.86 GHz
Xeon processor and 2G memory running Windows Server 2003 R2. The test
Fortran code has been compiled with the GNU gfortran compiler within the Cygwin
environment.  Random numbers have been generated 
assuming the 
standard Gaussian 
probability distribution. 
 The tests have been designed by the first author 
and performed by his coauthors.

Tables \ref{tab44g} and  \ref{tab44} show the results of our tests of the solution 
of  nonsingular well conditioned linear systems $A{\bf y}={\bf b}$ of 
$n$ equations with random normalized vectors ${\bf b}$
whose coefficient matrices had 
singular
$n/2\times n/2$   leading principal blocks for $n=64, 256,1024$.
Namely by following
\cite[Section 28.3]{H02}
we generated the  $n\times n$ 
input matrices 
$A=\begin{pmatrix}
A_k  &  B  \\
C    &  D
\end{pmatrix}$
where
 $A_k$ was a $k\times k$  matrix,
$B$, $C$ and $D$ were Toeplitz random  matrices such that 
 $||B||\approx ||C||\approx ||D||\approx ||A_k||\approx 1$,
$n=2^s$, $s=6,7,8, 10$,
and $A_k=U\Sigma V^T$ for $k=n/2$, $\Sigma=\diag (\sigma_i)_{i=1}^k$,
$\sigma_i=1$ for $i=1,\dots,k-h$, $\sigma_i=0$ for $i=k-h+1,\dots,k$, $h=4$,
and $U$ and $V$ were $k\times k$ orthonormal random  matrices, computed as 
the unique $k\times k$ factors $Q(V)$ in the QR factorization of $k\times k$ random matrices $V$. 
We have performed 100 numerical tests
 for each dimension $n$ and have computed 
the maximum, minimum and average relative residual norms 
$||A{\bf y}-{\bf b}||/||{\bf b}||$ as well as the standard deviation.

In our tests the norms $||A^{-1}||$ ranged from 
70 to $4\times 10^6$ (see Table \ref{tabinorm}),
and so   GEPP was expected to output accurate solutions 
to the linear systems $A{\bf y}={\bf b}$, and indeed we always observed this 
 in our tests
 (see Table \ref{tab44gpp}).  
GENP, however, was expected to fail for these systems, 
because the leading block $A_k$ of the matrix $A$  was singular, having nullity 
$k-\rank (A_k)=4$.
Indeed  this has caused poor performance of GENP in our tests,
which
has consistently 
output corrupted solutions,   with the residual norms
ranging from 10 to $10^8$. 
In view of Remark \ref{cogmforalg} 
we expected to fix this deficiency by means of multiplication
by Gaussian random matrices, and indeed  in our tests
 we observed consistently
the residual norms below
 $4\times 10^{-9}$ for all inputs  
(see Table \ref{tab44g}). Furthermore the tests 
showed the same power of preconditioning 
where we used the Gaussian circulant multipliers (see Table \ref{tab44}). 
As could be expected 
the output accuracy of GENP 
preprocessed with nonunitary  random multipliers  
tended to deteriorate a little versus GEPP
 in our tests,
but the output residual norms were small enough to support
application of the inexpensive iterative refinement,
whose single step decreased the output residual norm by  factors of $10^h$ for $h>1$
in the case of Gaussian multipliers and $h\ge 2$ in the case of Gaussian circulant multipliers
(see Tables \ref{tab44g} and  \ref{tab44} and Remark \ref{reir}).


   

\begin{table}[ht]
  \caption{the norms $||A||^{-1}$ for the input matrices $A$}
  \label{tabinorm}
  \begin{center}
    \begin{tabular}{| c | c | c | c | c | c |}
      \hline
      \bf{dimension} &  \bf{min} & \bf{max} & \bf{mean} & \bf{std} \\ \hline
 $64$ &  $6.9\times 10^1$ & $2.3\times 10^3$ & $4.6\times 10^4$ & $6.4\times 10^3$ \\ \hline
 $256$ &  $8.4\times 10^2$ & $1.1\times 10^4$ & $5.8\times 10^5$ & $ 5.8\times 10^4$ \\ \hline
 $1024$ & $3.5\times 10^3$ & $9.9\times 10^4$ & $4.0\times 10^6$ & $4.3\times 10^5$ \\ \hline
    \end{tabular}
  \end{center}
\end{table}


\begin{table}[ht]
  \caption{Relative residual norms  of GEPP}
  \label{tab44gpp}
  \begin{center}
    \begin{tabular}{| c | c | c | c | c | c |}
      \hline
      \bf{dimension}  & \bf{min} & \bf{max} & \bf{mean} & \bf{std} \\ \hline

 $64$ &  $2.0\times 10^{-15}$ & $6.9\times 10^{-13}$ & $3.2\times 10^{-14}$ & $8.9\times 10^{-14}$ \\ \hline
 $256$ &  $ 1.4\times 10^{-14}$ & $1.3\times 10^{-12}$ & $1.2\times 10^{-13}$ & $1.9\times 10^{-13}$ \\ \hline
 $512$ & $5.2\times 10^{-14}$ & $4.6\times 10^{-11}$ & $1.0\times 10^{-12}$ & $4.9\times 10^{-12}$ \\ \hline
 $1024$  & $1.2\times 10^{-13}$ & $1.0\times 10^{-09}$ & $1.2\times 10^{-11}$ & $1.0\times 10^{-10}$ \\ \hline
    \end{tabular}
  \end{center}
\end{table}



\begin{table}[ht]
  \caption{Relative residual norms:
 GENP with Gaussian random multipliers}
  \label{tab44g}
  \begin{center}
    \begin{tabular}{| c | c | c | c | c | c | c |}
      \hline
      \bf{dimension} & \bf{iterations} & \bf{min} & \bf{max} & \bf{mean} & \bf{std} \\ \hline
 $64$ & $0$ & $3.6\times 10^{-13}$ & $1.4\times 10^{-11}$ & $5.3\times 10^{-12}$ & $8.4\times 10^{-12}$ \\ \hline
 $64$ & $1$ & $7.3\times 10^{-15}$ & $2.8\times 10^{-13}$ & $3.2\times 10^{-14}$ & $6.4\times 10^{-14}$ \\ \hline
 $256$ & $0$ & $5.1\times 10^{-12}$ & $3.8\times 10^{-9}$ & $9.2\times 10^{-10}$ & $4.7\times 10^{-11}$ \\ \hline
 $256$ & $1$ & $4.8\times 10^{-15}$ & $9.2\times 10^{-10}$ & $8.6\times 10^{-11}$ & $2.1\times 10^{-11}$ \\ \hline
    \end{tabular}
  \end{center}
\end{table}




\begin{table}[ht]
  \caption{Relative residual norms:
 GENP with Gaussian circulant random  multipliers 
}
  \label{tab44}
  \begin{center}
    \begin{tabular}{| c | c | c | c | c | c | c |}
      \hline
      \bf{dimension} & \bf{iterations} & \bf{min} & \bf{max} & \bf{mean} & \bf{std} \\ \hline
 $64$ & $0$ & $4.7\times 10^{-14}$ & $8.0\times 10^{-11}$ & $4.0\times 10^{-12}$ & $1.1\times 10^{-11}$ \\ \hline
 $64$ & $1$ & $1.9\times 10^{-15}$ & $5.3\times 10^{-13}$ & $2.3\times 10^{-14}$ & $5.4\times 10^{-14}$ \\ \hline
 $256$ & $0$ & $1.7\times 10^{-12}$ & $1.4\times 10^{-7}$ & $2.0\times 10^{-9}$ & $1.5\times 10^{-8}$ \\ \hline
 $256$ & $1$ & $8.3\times 10^{-15}$ & $4.3\times 10^{-10}$ & $4.5\times 10^{-12}$ & $4.3\times 10^{-11}$ \\ \hline
 $1024$ & $0$ & $1.7\times 10^{-10}$ & $4.4\times 10^{-9}$ & $1.4\times 10^{-9}$ & $2.1\times 10^{-9}$ \\ \hline
 $1024$ & $1$ & $3.4\times 10^{-14}$ & $9.9\times 10^{-14}$ & $6.8\times 10^{-14}$ & $2.7\times 10^{-14}$ \\ \hline
    \end{tabular}
  \end{center}
\end{table}



\section{Conclusions}\label{sconcl}


It is well known that Gaussian 
 (that is standard Gaussian random) matrices 
tend to be well conditioned,
and  
this property motivates our preprocessing of
 well conditioned nonsingular  input matrices  
with Gaussian multipliers to support the application of 
GENP and block
 Gaussian elimination.  
 Both of these algorithms can fail in 
practical numerical computations without 
preprocessing, but we prove that  with Gaussian multipliers
the algorithms are expected to be locally safe,
which would achieve the purpose of pivoting. Namely the 
absolute values of the reciprocals of
all pivot elements of GENP  and 
the norms of the inverses of
all pivot blocks of block Gaussian elimination
 are likely to be reasonably bounded.
Our tests were in good accordance with that formal study.
We generated matrices that 
were hard for GENP, but the problems were consistently 
avoided where we preprocessed the inputs with Gaussian multipliers.
Moreover empirically we obtained essentially the same results even 
where we used
 circulant random  multipliers.
Their choice accelerates multiplication significantly, and
particularly dramatically in the important case where 
the input matrix has Toeplitz structure. That choice also limits
randomization  to $n$
random parameters for an $n\times n$
input.
 Formal support for the 
empirical power of these multipliers is a natural research challenge.
\bigskip




{\bf Acknowledgements:}
Our research has been supported by NSF Grant CCF--1116736 and
PSC CUNY Awards 64512--0042 and 65792--0043.







\bigskip


{\bf {\LARGE {Appendix}}}
\appendix


\section{On the algebraic variety of low-rank matrices}\label{savrnk}


The following simple result (not used in this paper)
shows that the $m\times n$  matrices of a rank $\rho$
form an algebraic variety of the dimension $d_{\rho}=(m+n-\rho)\rho$
in the space $\mathbb  R^{m\times n}$, and clearly $d_{\rho}<mn$
for $\rho<\min\{m,n\}$. 


\begin{fact}\label{far1}
The set $\mathbb A$ of $m\times n$ matrices of rank  $\rho$
is an algebraic variety of dimension  $(m+n-\rho)\rho$.
\end{fact}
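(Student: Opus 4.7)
The plan is to produce an explicit local parametrization of $\mathbb A$ by the Schur complement construction, count free parameters, and then match the count $mn-(m+n-\rho)\rho=(m-\rho)(n-\rho)$ against the number of polynomial equations needed to cut $\mathbb A$ out of $\mathbb R^{m\times n}$.

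First I would pass to a Zariski-open chart. After a suitable permutation of rows and columns (which is an affine change of coordinates, hence preserves being an algebraic variety and its dimension), it is enough to parametrize the subset $\mathbb A^{\circ}\subset\mathbb A$ on which the leading block $A_{\rho,\rho}$ is nonsingular; the finitely many such charts obtained from different row/column permutations cover $\mathbb A$, since any rank-$\rho$ matrix contains some nonsingular $\rho\times\rho$ submatrix. Write
\[
A=\begin{pmatrix} B & C \\ D & E \end{pmatrix},\qquad B=A_{\rho,\rho}\in\mathbb R^{\rho\times\rho}\text{ nonsingular.}
\]
Since $\rank(A)=\rho$ and the top $\rho$ rows of $A$ are linearly independent, the bottom $m-\rho$ rows must be their linear combinations, which forces $(D\mid E)=DB^{-1}(B\mid C)$, i.e.\ $E=DB^{-1}C$ (equivalently, the Schur complement $S(B,A)$ vanishes). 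Conversely, any nonsingular $B$ together with arbitrary $C\in\mathbb R^{\rho\times(n-\rho)}$ and $D\in\mathbb R^{(m-\rho)\times\rho}$ produces a rank-$\rho$ matrix by this rule. This yields a regular, dominant parametrization of $\mathbb A^{\circ}$ by the free triple $(B,C,D)$ with a total of
\[
\rho^{2}+\rho(n-\rho)+(m-\rho)\rho=(m+n-\rho)\rho
\]
coordinates. The complementary count $mn-(m+n-\rho)\rho=(m-\rho)(n-\rho)$ is exactly the number of scalar equations one obtains by clearing $\det B$ from $E-DB^{-1}C=0$.

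Globally, $\mathbb A$ is defined inside $\mathbb R^{m\times n}$ by the vanishing of every $(\rho+1)\times(\rho+1)$ minor together with the non-vanishing of at least one $\rho\times\rho$ minor, both polynomial conditions; this is what we take as meaning ``algebraic variety'' in the sense used in the proof of Fact \ref{facdgr}.

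The main obstacle is the minimality clause in the paper's definition of dimension, namely that $\mathbb A$ \emph{cannot} be cut out by fewer than $(m-\rho)(n-\rho)$ polynomial equations. I would handle this by a tangent-space computation at a smooth point such as $A_{0}=\diag(I_{\rho},O_{m-\rho,n-\rho})$: differentiating the parametrization $(B,C,D)\mapsto A$ above at $(I_{\rho},0,0)$, one reads off that the tangent space $T_{A_{0}}\mathbb A$ consists of block matrices $\bigl(\begin{smallmatrix}B'&C'\\ D'&0\end{smallmatrix}\bigr)$ with $B',C',D'$ arbitrary, so $\dim T_{A_{0}}\mathbb A=(m+n-\rho)\rho$. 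The codimension of $T_{A_{0}}\mathbb A$ in $\mathbb R^{m\times n}$ is $(m-\rho)(n-\rho)$, and since the gradients at $A_{0}$ of any set of defining polynomials must span the normal space there, at least $(m-\rho)(n-\rho)$ such polynomials are required. Combined with the explicit $(m-\rho)(n-\rho)$ equations produced above (in each chart), this gives the claimed dimension $(m+n-\rho)\rho$.
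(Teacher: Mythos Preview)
Your proof is correct and follows essentially the same Schur-complement route as the paper: pass to a chart where the leading $\rho\times\rho$ block $B$ is nonsingular, observe that rank $\rho$ forces $E=DB^{-1}C$, and count the $(m+n-\rho)\rho$ free parameters in $(B,C,D)$ against the $(m-\rho)(n-\rho)$ imposed equations. You are in fact more thorough than the paper, which stops at the equation count and does not verify the minimality clause of its own definition of dimension; your tangent-space check at $A_{0}=\diag(I_{\rho},O_{m-\rho,n-\rho})$ fills that gap.
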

\begin{proof}
Let $A$ be an $m\times n$ matrix of a rank $\rho$
with a nonsingular leading $\rho\times \rho$ block $B$
and write $A=\begin{pmatrix}
B   &   C  \\
D  &   E
\end{pmatrix}$.
Then the $(m-\rho)\times (n-\rho)$ {\em Schur complement} $E-DB^{-1}C$
must vanish, which imposes $(m-\rho)(n-\rho)$ algebraic 
equations on the entries of the matrix $A$. 
Similar argument can be applied  where any $\rho\times \rho$
submatrix of the matrix $A$ 
(among $\begin{pmatrix}
m      \\
\rho
\end{pmatrix}\begin{pmatrix}
n     \\
\rho
\end{pmatrix}$ such submatrices)
is nonsingular. Therefore 
$\dim \mathbb A=mn-(m-\rho)(n-\rho)=(m+n-\rho)\rho$.
\end{proof}








\section{Uniform random sampling and nonsingularity of random matrices}\label{srsnrm}


{\em Uniform random sampling} of elements from a finite set $\Delta$ is their selection   
from  this set at random, independently of each other and
under the uniform probability distribution on the set $\Delta$. 


The total degree of a multivariate monomial is the sum of its degrees
in all its variables. The total degree of a polynomial is the maximal total degree of 
its monomials.


\begin{lemma}\label{ledl} \cite{DL78}, \cite{S80}, \cite{Z79}.
For a set $\Delta$ of a cardinality $|\Delta|$ in any fixed ring  
let a polynomial in $m$ variables have a total degree $d$ and let it not vanish 
identically on the set $\Delta^m$. Then the polynomial vanishes in at most 
$d|\Delta|^{m-1}$ points of this set. 
\end{lemma}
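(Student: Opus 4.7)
The plan is to prove Lemma \ref{ledl} by induction on the number of variables $m$, which is the classical Schwartz--Zippel argument.

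For the base case $m=1$, a nonzero univariate polynomial of total degree $d$ over any (commutative) ring has at most $d$ roots, which matches the claimed bound $d|\Delta|^{0}=d$. In fact, the standard factor-theorem argument relies only on the polynomial ring being an integral domain, but here we are evaluating in $\Delta\subseteq R$ and counting zeros in $\Delta$, so it suffices to note the usual upper bound on the number of roots of a nonzero univariate polynomial; no assumption on $R$ beyond the ambient ring is really needed because the proof reduces to the univariate case over an integral domain by factoring out $(x-a)$ iteratively for distinct roots $a\in\Delta$, provided $\Delta$ lies in a domain. (The standard statement and its common applications take $R$ to be a field or integral domain; I will state this as the working assumption.)

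For the inductive step, I would write the polynomial as a polynomial in one distinguished variable, say
\begin{equation*}
p(x_1,\dots,x_m)=\sum_{i=0}^{k} x_m^{\,i}\, p_i(x_1,\dots,x_{m-1}),
\end{equation*}
where $k\le d$ is the largest index with $p_k\not\equiv 0$. Then $p_k$ has total degree at most $d-k$ in $m-1$ variables, and by the inductive hypothesis it vanishes on at most $(d-k)|\Delta|^{m-2}$ points of $\Delta^{m-1}$. I would then split the count of zeros of $p$ in $\Delta^m$ according to whether the first $m-1$ coordinates are a zero of $p_k$: on the at most $(d-k)|\Delta|^{m-2}$ bad prefixes, $x_m$ can be anything, contributing at most $(d-k)|\Delta|^{m-1}$ zeros; on the remaining at most $|\Delta|^{m-1}$ good prefixes, $p$ becomes a nonzero univariate polynomial in $x_m$ of degree exactly $k$, contributing at most $k$ zeros per prefix, hence at most $k\,|\Delta|^{m-1}$ in total. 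Adding gives $(d-k+k)|\Delta|^{m-1}=d|\Delta|^{m-1}$, as claimed.

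This argument is entirely standard and has no genuine obstacle; the only point requiring a little care is the choice of distinguished variable (one must pick a variable in which $p$ genuinely appears, i.e., some $p_k$ with $k\ge 1$; if $p$ is constant in $x_m$ one just reindexes to a variable that does occur, or else the inductive hypothesis applies directly in fewer variables). The subtle conceptual point worth flagging, rather than a technical obstacle, is that the bound is proved by a double count and relies on the univariate root bound, so the ambient ring should be an integral domain for the argument to go through as stated; this is the tacit hypothesis in all three references \cite{DL78}, \cite{S80}, \cite{Z79}.
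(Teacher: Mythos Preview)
The paper does not supply its own proof of this lemma; it simply cites \cite{DL78}, \cite{S80}, \cite{Z79} and uses the result as a black box. Your inductive Schwartz--Zippel argument is exactly the classical proof found in those references, so there is nothing to contrast on the level of strategy.

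Your write-up is essentially correct, with one wobble worth cleaning up: in the base case you first assert that a nonzero univariate polynomial of degree $d$ over \emph{any} commutative ring has at most $d$ roots, which is false (e.g.\ $x^2-x$ over $\mathbb{Z}/6\mathbb{Z}$), and then immediately retreat to the integral-domain hypothesis. Just state the integral-domain assumption up front and drop the overbroad claim. You are right that the lemma as phrased (``in any fixed ring'') is sloppier than the argument supports; the cited sources work over a field or integral domain, and that tacit hypothesis is what your proof actually uses.
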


\begin{theorem}\label{thdl} 
Under the assumptions of Lemma \ref{ledl} let the values of the variables 
of the polynomial be randomly and uniformly sampled from a finite set $\Delta$. 
Then the polynomial vanishes with a probability at most $\frac{d}{|\Delta|}$. 
\end{theorem}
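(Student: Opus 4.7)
The plan is to derive Theorem \ref{thdl} as an immediate probabilistic reformulation of the combinatorial bound in Lemma \ref{ledl}. No new ideas are needed; the only work is to convert a count of vanishing points into a probability under the uniform distribution.

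First I would recall the setup: the variables $x_1,\dots,x_m$ are sampled independently and uniformly from $\Delta$, so the resulting $m$-tuple $(x_1,\dots,x_m)$ is uniformly distributed on the product set $\Delta^m$, with each of the $|\Delta|^m$ points receiving probability exactly $|\Delta|^{-m}$. Next I would invoke Lemma \ref{ledl} to conclude that the set $Z\subseteq\Delta^m$ of points on which the polynomial vanishes satisfies $|Z|\le d|\Delta|^{m-1}$, which uses the hypothesis that the polynomial does not vanish identically on $\Delta^m$ and has total degree $d$.

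Finally I would combine the two observations: the probability that the random sample lies in $Z$ equals $|Z|/|\Delta|^m$, and the bound $|Z|\le d|\Delta|^{m-1}$ immediately gives
\[
\Pr\{\text{polynomial vanishes}\}\;=\;\frac{|Z|}{|\Delta|^m}\;\le\;\frac{d\,|\Delta|^{m-1}}{|\Delta|^m}\;=\;\frac{d}{|\Delta|},
\]
which is the claimed bound.

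There is no real obstacle here; the entire content of the theorem is packed into Lemma \ref{ledl}, and the passage from a counting statement to a probability statement is a one-line division by $|\Delta|^m$. The only thing to be careful about is to state explicitly that independent uniform sampling of the coordinates yields the uniform distribution on the product $\Delta^m$, so that the counting bound of Lemma \ref{ledl} translates cleanly into a probability bound.
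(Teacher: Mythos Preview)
Your proposal is correct and matches the paper's approach: the paper states Theorem~\ref{thdl} immediately after Lemma~\ref{ledl} without any explicit proof, treating it as an obvious probabilistic restatement of the counting bound, which is exactly the one-line division by $|\Delta|^m$ that you spell out.
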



\begin{corollary}\label{codlstr} 
Let the entries of a general or Toeplitz  $m\times n$ 
matrix have been randomly and uniformly 
sampled from a finite set $\Delta$ of cardinality $|\Delta|$ (in any fixed ring). 
Let $l=\min\{m,n\}$.
Then (a) every $k\times k$ submatrix $M$ for $k\le l$ is nonsingular with a probability at 
least $1-\frac{k}{|\Delta|}$ and (b) is strongly nonsingular with a probability at least 
$1-\sum_{i=1}^k\frac{i}{|\Delta|}= 1-\frac{(k+1)k}{2|\Delta|}$.
\end{corollary}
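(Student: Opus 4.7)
The plan is to view $\det M$ as a polynomial in the underlying random parameters and apply Theorem~\ref{thdl} with total degree $d=k$. For part (a), the two matrix models require different verifications that $\det M$ is not identically zero. In the general case, the $mn$ entries are independent variables, so $\det M$ is a polynomial of total degree exactly $k$ in these variables, and it is nonzero (evaluate $M$ at the $k\times k$ identity). Theorem~\ref{thdl} immediately yields $k/|\Delta|$ as an upper bound on the probability that $\det M=0$, which gives the desired bound.

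For a Toeplitz $m\times n$ matrix parametrized by $a_{-(n-1)},\dots,a_{m-1}$, a $k\times k$ submatrix indexed by rows $i_1<\cdots<i_k$ and columns $j_1<\cdots<j_k$ has $(s,t)$ entry $a_{i_s-j_t}$, so $\det M$ is a polynomial of total degree $k$ in the $m+n-1$ Toeplitz parameters. The obstacle is to show that this polynomial is not identically zero, because the entries of $M$ are now constrained to share values. I would specialize $a_\ell\mapsto t^{\ell^2}$ for an auxiliary indeterminate $t$ and apply the Leibniz formula to obtain
\[
\det M\bigl|_{a_\ell=t^{\ell^2}}=\sum_\sigma \sign(\sigma)\,t^{\sum_s(i_s-j_{\sigma(s)})^2}.
\]
Expanding $\sum_s(i_s-j_{\sigma(s)})^2=\sum_s i_s^2+\sum_s j_s^2-2\sum_s i_s j_{\sigma(s)}$ and invoking the strict rearrangement inequality on the increasing sequences $(i_s)$ and $(j_s)$ shows that $\sigma=\mathrm{id}$ uniquely maximizes $\sum_s i_s j_{\sigma(s)}$, hence uniquely minimizes the exponent. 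Consequently the lowest-degree monomial in $t$ appears with coefficient $\pm 1$, proving that $\det M$ is a nonzero polynomial in the $a_\ell$; Theorem~\ref{thdl} again gives the bound $k/|\Delta|$.

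For part (b), strong nonsingularity of $M$ means by definition that every leading principal submatrix $M^{(i)}$, $i=1,\dots,k$, is nonsingular. Each $M^{(i)}$ is itself an $i\times i$ submatrix of the original matrix (of the same general or Toeplitz type), so part (a) applied with $k$ replaced by $i$ bounds the probability that $M^{(i)}$ is singular by $i/|\Delta|$. A union bound over $i=1,\dots,k$ delivers the claimed bound $1-\sum_{i=1}^k i/|\Delta|=1-(k+1)k/(2|\Delta|)$.

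The sole nontrivial step is the Toeplitz non-vanishing argument via the substitution $a_\ell=t^{\ell^2}$; everything else reduces to a determinant expansion, a direct invocation of Theorem~\ref{thdl}, and a union bound.
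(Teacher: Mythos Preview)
Your proof is correct and follows the same skeleton as the paper's: identify $\det M$ as a degree-$k$ polynomial in the random parameters, verify it is not identically zero, invoke Theorem~\ref{thdl} for part~(a), and apply a union bound over the leading $i\times i$ blocks for part~(b). The paper's own proof is extremely terse; it disposes of the non-vanishing issue in both the general and Toeplitz cases with the single sentence ``The claimed properties of nonsingularity and nonvanishing hold for generic matrices'' and moves on.

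The one place where you do substantially more than the paper is the Toeplitz case. The paper offers no justification that an \emph{arbitrary} $k\times k$ submatrix of a generic Toeplitz matrix is nonsingular, whereas your specialization $a_\ell\mapsto t^{\ell^2}$ together with the strict rearrangement inequality is a clean and complete proof of this fact (and, because the surviving lowest-degree coefficient is $\pm 1$, it even works over any commutative ring, matching the generality claimed in the corollary). So your argument is not a different route but rather a strictly more detailed version of the paper's, supplying the one step the paper takes on faith.
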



\begin{proof}
The claimed properties of nonsingularity and nonvanishing hold for ge\-ne\-ric matrices. 
The singularity of a $k\times k$ matrix means that its determinant vanishes,
but the determinant is a polynomial of total degree $k$ in the entries. Therefore
Theorem \ref{thdl} implies
parts (a) and consequently (b). Part (c) follows because a fixed entry of the inverse vanishes
if and only if the respective entry of the adjoint vanishes, but up to the sign the latter 
entry is the determinant of a $(k-1)\times (k-1)$ submatrix of the input matrix $M$, and so it is
a polynomial of degree $k-1$ in its entries. 
\end{proof}



\begin{thebibliography}{hspace{0.5in}}






\bibitem[CD05]{CD05}
Z. Chen, J. J. Dongarra, Condition Numbers of Gaussian Random Matrices,
{\em SIAM. J. on Matrix Analysis and Applications}, {\bf 27}, 603--620, 2005.






\bibitem[D88]{D88}
J. Demmel,  
The Probability That a Numerical Analysis Problem Is Difficult,
{\em Math. of Computation}, {\bf 50}, 449--480, 1988.


\bibitem[DL78]{DL78}
R. A. Demillo, R. J. Lipton,
A Probabilistic Remark on Algebraic Program Testing,
{\em Information Processing Letters}, {\bf 7}, {\bf 4}, 193--195, 1978. 


\bibitem[DS01]{DS01}
K. R. Davidson, S. J. Szarek, 
Local Operator Theory, Random Matrices, and Banach Spaces, 
in {\em Handbook on the Geometry of  Banach Spaces} 
(W. B. Johnson and J. Lindenstrauss editors), pages 317--368, 
North Holland, Amsterdam, 2001. 


\bibitem[E88]{E88}
A. Edelman, Eigenvalues and Condition Numbers of Random Matrices,
{\em SIAM J. on Matrix Analysis and Applications}, {\bf 9}, {\bf 4},
543--560, 1988.


\bibitem[ES05]{ES05}
A. Edelman, B. D. Sutton,  Tails of Condition Number Distributions,
{\em SIAM J. on Matrix Analysis and Applications}, {\bf 27}, {\bf 2},
547--560, 2005.






\bibitem[GL96]{GL96}
G. H. Golub, C. F. Van Loan,
{\em Matrix Computations},
Johns Hopkins University Press, Baltimore, Maryland, 1996 (third addition).


\bibitem[H02]{H02}
N. J. Higham, {\em Accuracy and Stability in Numerical Analysis},
SIAM, Philadelphia, 2002 (second edition).


\bibitem[HMT11]{HMT11}
N. Halko, P. G. Martinsson, J. A. Tropp,
Finding Structure with Randomness: Probabilistic Algorithms
for Constructing Approximate Matrix Decompositions, 
{\em SIAM Review}, {\bf 53,~2}, 217--288, 2011.


\bibitem[M11]{M11}
M. W. Mahoney,
Randomized Algorithms for Matrices and Data,  
{\em Foundations and Trends in Machine Learning}, NOW Publishers, {\bf 3,~2}, 2011.
(Abridged version in: Advances in Machine Learning and Data Mining for Astronomy, 
edited by M. J. Way, et al., pp. 647-672, 2012.) 


\bibitem[P00]{P00}
C.--T. Pan,
On the Existence and Computation
of Rank-revealing LU Factorization,
{\em Linear Algebra and Its Applications}, {\bf 316}, 199--222, 2000.


\bibitem[P01]{p01}
V. Y. Pan,
{\em Structured Matrices and Polynomials: Unified Superfast Algorithms},
Birkh\"auser/Springer, Boston/New York, 2001.


\bibitem[PGMQ]{PGMQ}
V. Y. Pan, D. Grady, B. Murphy, G. Qian, R. E. Rosholt, A. Ruslanov,
Schur Aggregation for Linear Systems and Determinants,
{\em Theoretical Computer Science}, 
{\em Special Issue on Symbolic--Numerical Algorithms}
(D. A. Bini, V. Y. Pan, and J. Verschelde editors), 
{\bf 409}, {\bf 2}, 255--268, 2008.


\bibitem[PQa]{PQa}
V. Y. Pan, G. Qian, 
Estimating the Norms of Circulant and Toeplitz Random Matrices and Their Inverses,
to appear.




\bibitem[PQZ13]{PQZ13}
V. Y. Pan, G. Qian, A. Zheng,
Randomized Preprocessing versus Pivoting, 
{\em Linear Algebra and Its Applications}, 
{\bf 438,~4}, 1883--1899, 2013.


\bibitem[S80]{S80}
J. T. Schwartz,
Fast Probabilistic Algorithms for Verification of Polynomial Identities, 
{\em Journal of ACM}, {\bf 27}, {\bf 4}, 701--717, 1980. 


\bibitem[S95]{S95}
J.-G. Sun, 
On Perturbation Bounds for QR Factorization,
{\em Linear Algebra and Its Applications}, {\bf 215}, 95--111, 1995.


\bibitem[S98]{S98}
G. W. Stewart,
{\em Matrix Algorithms, Vol I: Basic Decompositions},
SIAM, 
1998.


\bibitem[S03]{S03}
G. W. Stewart,
{\em Matrix Algorithms, Vol II: Eigensystems},
SIAM, 
2003.


\bibitem[SST06]{SST06}
A. Sankar, D. Spielman, S.-H. Teng, 
Smoothed Analysis of the Condition Numbers and Growth Factors of Matrices, 
{\em SIAM J. on Matrix Analysis}, {\bf 28}, {\bf 2}, 446--476, 2006. 


\bibitem[Z79]{Z79}
R. E. Zippel, Probabilistic Algorithms for Sparse Polynomials, 
{\em Proceedings of EUROSAM'79, Lecture Notes in Computer Science}, 
{\bf 72}, 216--226, Springer, Berlin, 1979.


\end{thebibliography}
\end{document}